\theoremstyle{plain}
\newtheorem{lema}{Lemma}[section]
\newtheorem{prop}[lema]{Proposition}
\newtheorem{teo}[lema]{Theorem}
\newtheorem{coro}[lema]{Corollary}
\theoremstyle{definition}
\newtheorem*{defi}{Definition}
\newtheorem{obs}[lema]{Remark}
\newcommand{\bd}{\partial}
\begin{document}

\title[The non-pure version of $\Delta^d$ and $\partial\Delta^d$]{The non-pure version of the simplex and the boundary of the simplex}

\author[N.A. Capitelli]{Nicol\'as A. Capitelli}

\subjclass[2010]{55M05, 52B70, 57Q99}

\keywords{Simplicial complexes, combinatorial manifolds, Alexander dual}

\address{Departamento de Matem\'atica-IMAS\\
 FCEyN, Universidad de Buenos Aires\\ Buenos
Aires, Argentina.}

\email{ncapitel@dm.uba.ar} 

\begin{abstract} We introduce the non-pure versions of simplicial balls and spheres with minimum number of vertices. These are a special type of non-homogeneous balls and spheres ($NH$-balls and $NH$-spheres) satisfying a minimality condition on the number of maximal simplices. The main result is that \emph{minimal} $NH$-balls and $NH$-spheres are precisely the simplicial complexes whose iterated Alexander duals converge respectively to a simplex or the boundary of a simplex.
\end{abstract}

\maketitle

\section{Introduction}

A simplicial complex $K$ of dimension $d$ is \emph{vertex-minimal} if it is a simplex or it has $d+2$ vertices. It is not hard to see that a vertex-minimal homogeneous (or pure) complex of dimension $d$ is either an elementary starring $(\tau,a)\Delta^d$ of a $d$-simplex or the boundary $\partial\Delta^{d+1}$ of a ($d+1$)-simplex. On the other hand, a general non-pure complex with minimum number of vertices has no precise characterization. However, since vertex-minimal pure complexes are either balls or spheres, it is natural to ask whether there is a non-pure analogue to these polyhedra within the theory of non-homogeneous balls and spheres. $NH$-balls and $NH$-spheres are the non-necessarily pure versions of combinatorial balls and spheres. They are part of a general theory of non-homogeneous manifolds ($NH$-manifolds) recently introduced by G. Minian and the author \cite{CM}. The study of $NH$-manifolds was in part motivated by Bj\"orner and Wachs's notion of non-pure shellability \cite{BjWa} and by their relationship with factorizations of Pachner moves between (classical) manifolds. $NH$-balls and $NH$-spheres share many of the basic properties of combinatorial balls and spheres and they play an equivalent role to these in the generalized non-pure versions of classical manifold theorems. In a recent work \cite{CM2}, the results of Dong and Santos-Sturmfels on the homotopy type of the Alexander dual of simplicial balls and spheres were generalized to the non-homogeneous setting: the Alexander dual of an $NH$-ball is a contractible space and the Alexander dual of an $NH$-sphere is homotopy equivalent to a sphere (see \cite{Dong,SS}). It was also shown in \cite{CM2} that non-homogeneous balls and spheres are the \emph{Alexander double duals} of classical balls and spheres. This result establishes a natural connection between the pure and non-pure theories.

The purpose of this article is to introduce \emph{minimal} $NH$-balls and $NH$-spheres, which are respectively the non-pure versions of vertex-minimal balls and spheres. Note that $\partial\Delta^{d+1}$ is not only the $d$-sphere with minimum number of vertices but also the one with minimum number of maximal simplices. For non-pure spheres, this last property is strictly stronger than vertex-minimality and it is convenient to define minimal $NH$-spheres as the ones with minimum number of maximal simplices. With this definition, minimal $NH$-spheres with the homotopy type of a $k$-sphere are precisely the non-pure spheres whose nerve is $\partial\Delta^{k+1}$, a property that also characterizes the boundary of simplices. On the other hand, an $NH$-ball $B$ is minimal if it is part of a decomposition of a minimal $NH$-sphere, i.e. if there exists a combinatorial ball $L$ with $B\cap L=\partial L$ such that $B+L$ is a minimal $NH$-sphere. This definition is consistent with the notion of vertex-minimal simplicial ball (see Lemma \ref{Lema:equivalencesOfMinimalBalls} below).

Surprisingly, minimal $NH$-balls and $NH$-spheres can be characterized independently of their definition by a property involving Alexander duals. Denote by $K^*$ the Alexander dual of a complex $K$ relative to the vertices of $K$. Put inductively $K^{*(0)}=K$ and $K^{*(m)}=(K^{*(m-1)})^*$. Thus, in each step $K^{*(i)}$ is computed relative to its own vertices, i.e. as a subcomplex of the sphere of minimum dimension containing it. We call $\{K^{*(m)}\}_{m\in\mathbb{N}_0}$ the \emph{sequence of iterated Alexander duals} of $K$. The main result of the article is the following

\begin{teo} \label{Teo: ppal} \mbox{}
\begin{itemize}
\item[($i$)] There is an $m\in\mathbb{N}_0$ such that $K^{*(m)}=\partial\Delta^d$ if and only if $K$ is a minimal $NH$-sphere.
\item[($ii$)]
There is an $m\in\mathbb{N}_0$ such that $K^{*(m)}=\Delta^d$ if and only if $K$ is a minimal $NH$-ball.
\end{itemize}
\end{teo}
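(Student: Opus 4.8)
The plan is to reduce everything to two facts: first, that the Alexander dual operation, applied to an $NH$-ball or $NH$-sphere, stays inside the classes of $NH$-balls and $NH$-spheres in a controlled way; and second, that the operation \emph{strictly decreases} some complexity measure — most naturally the number of maximal simplices, or failing that the number of vertices — until the complex stabilizes. The fixed points of $K\mapsto K^*$ among $NH$-balls/spheres must then be exactly $\Delta^d$ and $\partial\Delta^{d+1}$, which are trivially their own duals (up to relabeling), and the minimality hypothesis is what guarantees we land on these rather than on some larger stable configuration. So I would organize the argument around the single claim: \emph{$K$ is a minimal $NH$-sphere (resp. $NH$-ball) if and only if $K^*$ is a minimal $NH$-sphere (resp. $NH$-ball) with no more maximal simplices, and $K^{*(m)}$ eventually reaches the unique minimal representative.}

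The forward directions ($\Leftarrow$) I would handle first, since they should be softer. If $K=\partial\Delta^{d+1}$ then $K^{*}=\partial\Delta^{d+1}$ (the Alexander dual of the boundary of a simplex on $d+2$ vertices is again $\partial\Delta^{d+1}$), and likewise $(\Delta^d)^*$ is a single vertex or an appropriate face — one must check the base case carefully relative to the correct vertex set — and then iterating stabilizes, giving the ``only if'' of both items trivially from the ``if'' plus the characterization. Wait: the statement is phrased so that the substantive content is ``$K^{*(m)}=\partial\Delta^d$ $\Rightarrow$ $K$ minimal $NH$-sphere'' and conversely. For the converse ($K$ minimal $NH$-sphere $\Rightarrow$ some $K^{*(m)}=\partial\Delta^d$): invoke that $K^*$ is homotopy equivalent to a sphere (from \cite{CM2}), show using the nerve characterization of minimal $NH$-spheres quoted in the introduction that $K^*$ is again a minimal $NH$-sphere, and then run the complexity-decrease induction. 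For the converse in item ($ii$), use that the Alexander dual of an $NH$-ball is contractible, plus the decomposition definition of minimal $NH$-ball: if $B+L$ is a minimal $NH$-sphere, dualize the whole decomposition and argue $B^*$ sits inside a minimal $NH$-sphere decomposition, hence is a minimal $NH$-ball.

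For the harder directions ($\Rightarrow$ in the displayed theorem, i.e. the dual hypothesis implies minimality), I would argue contrapositively combined with the double-dual theorem from \cite{CM2}: if $K^{*(m)}=\partial\Delta^d$, then $K=K^{*(0)}$ is obtained from $\partial\Delta^d$ by applying the dual $m$ times, and each application of $*$ to a minimal $NH$-sphere yields a minimal $NH$-sphere (the stability established above, read backwards), so $K$ is a minimal $NH$-sphere; the point is that the \emph{entire} orbit consists of minimal $NH$-spheres, not just the endpoint. This needs the fact that $(K^*)^*$ relates to $K$ — the double-dual result — to propagate the property up the chain. The main obstacle, and where I expect to spend most of the effort, is proving that $*$ sends minimal $NH$-spheres to minimal $NH$-spheres while not increasing the maximal-simplex count, and dually for balls: this requires an explicit combinatorial description of the Alexander dual of a complex whose nerve is $\partial\Delta^{k+1}$, controlling both the homotopy type (handled by \cite{CM2}) and the non-homogeneity/minimality structure (the new part). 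Once that closure-plus-monotonicity lemma is in hand, the termination of the iteration and the identification of the stable complexes as $\partial\Delta^d$ and $\Delta^d$ should follow from a short extremal argument — the minimal $NH$-sphere with the fewest maximal simplices of a given homotopy type is forced to be $\partial\Delta^{k+1}$ by the nerve characterization, and a complex equal to its own dual in that class must be exactly that.
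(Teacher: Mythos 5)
Your overall skeleton --- closure of the class of minimal $NH$-spheres/$NH$-balls under Alexander dualization, plus a strictly decreasing complexity measure forcing termination --- is exactly the paper's strategy. But there are two concrete factual errors and one large unfilled gap. First, $\partial\Delta^{d+1}$ is \emph{not} its own Alexander dual: relative to its $d+2$ vertices one has $(\partial\Delta^{d+1})^*=\{\emptyset\}$ and $(\Delta^d)^*=\emptyset$, so the iteration does not stabilize at $\partial\Delta^d$ or $\Delta^d$; it passes through them one step before dying at $\{\emptyset\}$ or $\emptyset$. Your ``fixed point'' framing therefore does not identify the terminal objects correctly; the paper instead argues that once $K^{*(m_0)}=\{\emptyset\}$, the previous term must have been $\partial\Delta^d$. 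Second, the maximal-simplex count is \emph{not} monotone under $*$: a minimal $NH$-sphere of dimension $d$ and homotopy dimension $k$ has $k+2$ maximal simplices while its dual has $d-k+1$, which can be larger. The correct decreasing quantity is the number of vertices, and proving $|V_{S^*}|<|V_S|$ already requires the nontrivial fact that minimal $NH$-spheres are vertex-minimal ($|V_S|=d+2$), which is Proposition \ref{lemmavertices} of the paper and needs its own induction.

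The larger problem is that the closure lemma --- $K$ minimal $NH$-sphere (resp.\ ball) if and only if $K^{\tau}$ is --- is precisely where all the work lies, and you explicitly defer it. Note that the ``if and only if'' with an arbitrary (possibly nonempty) cone simplex $\tau$ is essential: to run the backward induction from $K^{*(m)}=\partial\Delta^d$ down to $K$ you must invert each dualization, and the inverse of $L\mapsto L^*$ is $L\mapsto L^{\Delta(V_K-V_{L})}$, a dual relative to a \emph{larger} vertex set, not the plain dual. The paper handles this via the formula $K^{\tau}=\partial\tau\ast\Delta_K+\tau\ast K^*$ together with Lemma \ref{lema:dualvertexminimal} ($K^*=lk(u,K)^{\rho}$ when $K=\Delta^d+u\ast lk(u,K)$), inducting on dimension through links. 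Your appeal to the homotopy-theoretic results of \cite{CM2} (contractibility/sphericity of the dual) controls only the homotopy type, not the $NH$-manifold structure or the count of maximal simplices, so it cannot substitute for this combinatorial argument. As it stands the proposal is a correct outline of the right strategy with the central step missing and two of its supporting claims false as stated.
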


Note that $K^*=\Delta^d$ if and only if $K$ is a vertex-minimal $d$-ball which is not a simplex, so ($ii$) describes precisely all complexes converging to vertex-minimal balls. Theorem \ref{Teo: ppal} characterizes the classes of $\Delta^d$ and $\partial\Delta^d$ in the equivalence relation generated by $K\sim K^*$.

\section{Preliminaries}

\subsection{Notations and definitions}

All simplicial complexes that we deal with are assumed to be finite. Given a set of vertices $V$ , $|V|$ will denote its cardinality and $\Delta(V)$ the simplex spanned by its vertices. $\Delta^d = \Delta(\{0,\ldots,d\})$ will denote a generic $d$-simplex and $\partial\Delta^d$ its boundary. The set of vertices of a complex $K$ will be denoted $V_K$ and we set $\Delta_K := \Delta(V_K)$. A simplex is \emph{maximal} or \emph{principal} in a complex $K$ if it is not a proper face of any other simplex of $K$. We denote by $\mathsf{m}(K)$ the number of principal simplices in $K$. A \emph{ridge} is a maximal proper face of a principal simplex. A complex is \emph{pure} or \emph{homogeneous} if all its maximal simplices have the same dimension.

$\sigma\ast\tau$ will denote the join of the simplices $\sigma$ and $\tau$ (with $V_{\sigma}\cap V_{\tau}=\emptyset$) and $K\ast L$ the join of the complexes $K$ and $L$ (where $V_K\cap V_L=\emptyset$). By convention, if $\emptyset$ is the empty simplex and $\{\emptyset\}$ the complex containing only the empty simplex then $K\ast\{\emptyset\}=K$ and $K\ast\emptyset=\emptyset$. Note that $\partial\Delta^0=\{\emptyset\}$. For $\sigma\in K$,  $lk(\sigma,K)=\{\tau\in K:\ \tau\cap\sigma=\emptyset,\ \tau\ast\sigma\in K\}$ denotes its \emph{link} and $st(\sigma,K)=\sigma\ast lk(\sigma,K)$ its \emph{star}. The union of two complexes $K, L$ will be denoted by $K+L$. A subcomplex $L\subset K$ is said to be \emph{top generated} if every principal simplex of $L$ is also principal in $K$.

$K\searrow L$ will mean that $K$ (simplicially) collapses to $L$. A complex is \emph{collapsible} if it has a subdivision which collapses to a single vertex. The \emph{simplicial nerve} $\mathcal{N}(K)$ of $K$ is the complex whose vertices are the principal simplices of $K$ and whose simplices are the finite subsets of principal simplices of $K$ with non-empty intersection.

Two complexes are \emph{$PL$-isomorphic} if they have a common subdivision. A \textit{combinatorial $d$-ball} is a complex $PL$-isomorphic to $\Delta^d$. A \textit{combinatorial $d$-sphere} is a complex $PL$-isomorphic to $\partial\Delta^{d+1}$. By convention, $\partial\Delta^0=\{\emptyset\}$ is a sphere of dimension $-1$. A \textit{combinatorial $d$-manifold} is a complex $M$ such that $lk(v,M)$ is a combinatorial ($d-1$)-ball or ($d-1$)-sphere for every $v\in V_M$. A ($d-1$)-simplex in a combinatorial $d$-manifold $M$ is a face of at most two $d$-simplices of $M$ and the boundary $\partial M$ is the complex generated by the ($d-1$)-simplices which are face of exactly one $d$-simplex. Combinatorial $d$-balls and $d$-spheres are combinatorial $d$-manifolds. The boundary of a combinatorial $d$-ball is a combinatorial ($d-1$)-sphere.

\subsection{Non-homogeneous balls and spheres}\label{Subsection:NHBallsAndSpheres}

In order to make the presentation self-contained, we recall first the definition and some basic properties of non-homogeneous balls and spheres. For a comprehensive exposition of the subject, the reader is referred to \cite{CM} (see also \cite[$\S 2.3$]{CM2} for a brief summary).

$NH$-balls and $NH$-spheres are special types of $NH$-manifolds, which are the non-necessarily pure versions of combinatorial manifolds. $NH$-manifolds have a local structure consisting of regularly-assembled pieces of Euclidean spaces of different dimensions. In Figure 1 we show some examples of $NH$-manifolds and their underlying spaces. $NH$-manifolds, $NH$-balls and $NH$-spheres are defined as follows.

\begin{defi} An \emph{$NH$-manifold} (resp. \emph{$NH$-ball}, \emph{$NH$-sphere}) of dimension $0$ is a manifold (resp. ball, sphere) of dimension $0$. An $NH$-sphere of dimension $-1$ is, by convention, the complex $\{\emptyset\}$. For $d\geq 1$, we define by induction

 \begin{itemize}
 \item An \emph{$NH$-manifold} of dimension $d$ is a complex $M$ of dimension $d$ such that $lk(v,M)$ is an $NH$-ball of dimension $0\leq k\leq d-1$ or an $NH$-sphere of dimension $-1\leq k\leq d-1$ for all $v\in V_M$.
 \item An \emph{$NH$-ball} of dimension $d$ is a collapsible $NH$-manifold of dimension $d$.
 \item An \emph{$NH$-sphere} of dimension $d$ and \emph{homotopy dimension} $k$ is an $NH$-manifold $S$ of dimension $d$ such that there exist a top generated $NH$-ball $B$ of dimension $d$ and a top generated combinatorial $k$-ball $L$ such that $B + L=S$ and $B\cap L=\bd{L}$. We say that  $S=B+L$ is a \emph{decomposition} of $S$ and write $\dim_h(S)$ for the homotopy dimension of $S$.\end{itemize}\end{defi}
 
\begin{figure}[h]
\centering
\includegraphics[width=6.00in,height=2.00in]{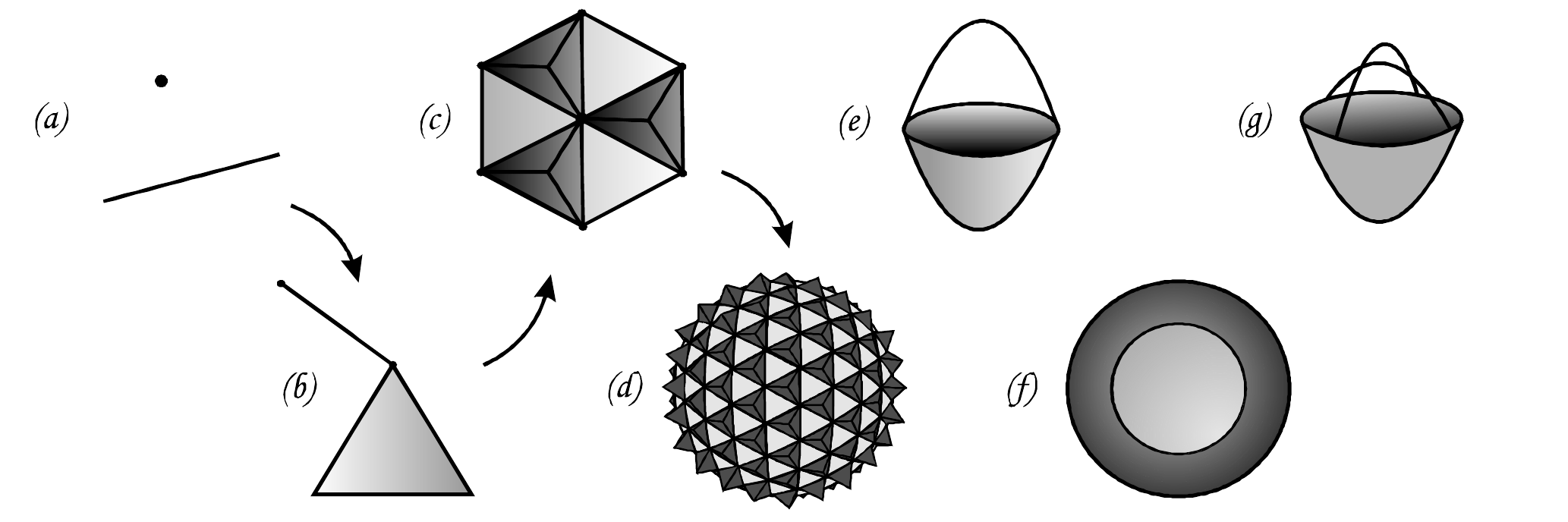}
\caption{Examples of $NH$-manifolds. ($a$), ($d$) and ($e$) are $NH$-spheres of dimension $1$, $3$ and $2$ and homotopy dimension $0$, $2$ and $1$ respectively. ($b$) is an $NH$-ball of dimension $2$ and ($c$), ($f$) are $NH$-balls of dimension $3$. ($g$) is an $NH$-manifold which is neither an $NH$-ball nor an $NH$-sphere. The sequence ($a$)-($d$) evidences how $NH$-manifolds are inductively defined.}
\end{figure}

The definitions of $NH$-ball and $NH$-sphere are motivated by the classical theorems of Whitehead and Newman (see e.g.  \cite[Corollaries 3.28 and 3.13]{RoSa}). Just like for classical combinatorial manifolds, it can be seen that the class of $NH$-manifolds (resp. $NH$-balls, $NH$-spheres) is closed under subdivision and that the link of \emph{every} simplex in an $NH$-manifold is an $NH$-ball or an $NH$-sphere. Also, the homogeneous $NH$-manifolds (resp. $NH$-balls, $NH$-spheres) are precisely the combinatorial manifolds (resp. balls, spheres). Globally, a connected $NH$-manifold $M$ is (non-pure) \emph{strongly connected}: given two principal simplices $\sigma,\tau\in M$ there is a sequence of maximal simplices $\sigma=\eta_1,\ldots,\eta_t=\tau$ such that $\eta_i\cap \eta_{i+1}$ is a ridge of $\eta_i$ or $\eta_{i+1}$ for every $1\leq i\leq t-1$ (see \cite[Lemma 3.15]{CM}). In particular, $NH$-balls and $NH$-spheres of homotopy dimension greater that $0$ are strongly connected.

Unlike for classical spheres, non-pure $NH$-spheres do have boundary simplices; that is, simplices whose links are $NH$-balls. However, for any decomposition $S=B+L$ of an $NH$-sphere and any $\sigma\in L$, $lk(\sigma,S)$ is an $NH$-sphere with decomposition $lk(\sigma,S)=lk(\sigma,B)+lk(\sigma,L)$ (see \cite[Lemma 4.8]{CM}). In particular, if $\sigma\in B\cap L$ then $lk(\sigma,B)$ is an $NH$-ball.

\subsection{The Alexander dual.}\label{Subsection:AlexanderDual} For a finite simplicial complex $K$ and a ground set of vertices $V\supseteq V_K$, the \emph{Alexander dual} of $K$ (relative to $V$) is the complex $$K^{*_V}=\{\sigma\in\Delta(V)\,|\,\Delta(V-V_{\sigma})\notin K\}.$$ The main importance of $K^{*_V}$ lies in the combinatorial formulation of Alexander duality: $H_i(K^{*_V})\simeq H^{n-i-3}(K)$. Here $n=|V|$ and the homology and cohomology groups are reduced (see e.g. \cite{BjTa}). In what follows, we shall write $K^*:=K^{*_{V_K}}$ and $K^{\tau}:=K^{*_V}$ if $\tau=\Delta(V-V_K)$. With this convention, $K^{\tau}=K^*$ if $\tau=\emptyset$. Note that $(\Delta^d)^*=\emptyset$ and $(\partial\Delta^{d+1})^*=\{\emptyset\}$.

The relationship between Alexander duals relative to different ground sets of vertices is given by the following formula (see \cite[Lemma 3.1]{CM2}): \begin{equation}\label{eq:unica} K^{\tau}=\partial\tau\ast\Delta_K+\tau\ast K^*.\tag{$\ast$}\end{equation} Here $K^*$ is viewed as a subcomplex of $\Delta_K$. It is easy to see from the definition that $(K^*)^{\Delta(V_K-V_{K^*})}=K$ and that $(K^{\tau})^*=K$ if $K\neq\Delta^d$ (see \cite[Lemma 3.1]{CM2}). The following result characterizes the Alexander dual of vertex-minimal complexes.

\begin{lema}[{\cite[Lemma 4.1]{CM2}}] \label{lema:dualvertexminimal} If $K=\Delta^d+u\ast lk(u,K)$ with $u\notin\Delta^d$, then $K^*=lk(u,K)^{\tau}$ where $\tau=\Delta(V_K-V_{st(u,K)})$.\end{lema}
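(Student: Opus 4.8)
The plan is to compute $K^*$ directly from the definition of the Alexander dual and recognise the answer. First I would pin down the vertex sets involved. Since $lk(u,K)$ contains no simplex through $u$, and since every simplex of $K$ not containing $u$ is a face of $\Delta^d$, we get $lk(u,K)\subseteq\Delta^d$; writing $L:=lk(u,K)$ and $W:=V_L\subseteq\{0,\dots,d\}$, this gives $V_K=\{0,\dots,d\}\cup\{u\}$ (so that $K$ is vertex-minimal), $st(u,K)=u\ast L$, and therefore $\tau=\Delta(\{0,\dots,d\}-W)$. Note also that, by the conventions of \S\ref{Subsection:AlexanderDual}, $L^{\tau}$ is the Alexander dual of $L$ relative to the ground set $V=\{0,\dots,d\}$ (indeed $\tau=\Delta(V-V_L)$), that is, $L^{\tau}=L^{*_{\{0,\dots,d\}}}=\{\sigma\in\Delta(\{0,\dots,d\}):\ \Delta(\{0,\dots,d\}-V_{\sigma})\notin L\}$.

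Next comes the main computation. Let $\sigma\in\Delta(V_K)$ and split according to whether $u\in V_{\sigma}$. If $u\in V_{\sigma}$, then $V_K-V_{\sigma}\subseteq\{0,\dots,d\}$, so $\Delta(V_K-V_{\sigma})$ is a face of $\Delta^d\subseteq K$, hence $\sigma\notin K^*$; in particular $K^*\subseteq\Delta(\{0,\dots,d\})$. If $u\notin V_{\sigma}$, then $\Delta(V_K-V_{\sigma})=u\ast\Delta(\{0,\dots,d\}-V_{\sigma})$ is a simplex through $u$; since the simplices of $K=\Delta^d+u\ast L$ that contain $u$ are exactly the simplices of $u\ast L$ containing $u$, this face lies in $K$ if and only if $\Delta(\{0,\dots,d\}-V_{\sigma})\in L$. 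Thus $\sigma\in K^*$ if and only if $\Delta(\{0,\dots,d\}-V_{\sigma})\notin L$, which is precisely the membership condition for $L^{\tau}=L^{*_{\{0,\dots,d\}}}$. Comparing the two descriptions — both $K^*$ and $L^{\tau}$ being subcomplexes of $\Delta(\{0,\dots,d\})$, equality of their simplex sets is equality of complexes — yields $K^*=L^{\tau}$.

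I do not expect a genuine obstacle: the argument is essentially bookkeeping of ground sets and of complements. The one point that needs care is the identification $lk(u,K)\subseteq\Delta^d$, which is what forces $|V_K|=d+2$ and pins down $\tau=\Delta(\{0,\dots,d\}-W)$; one should also verify the degenerate faces $\sigma=\emptyset$ and $\sigma=V_K$ separately (using $\emptyset\in K$ and $\emptyset\in L$), so that the equivalences above hold without exception. An alternative route would be to invoke formula \eqref{eq:unica}, rewriting $L^{\tau}=\partial\Delta(\{0,\dots,d\}-W)\ast\Delta(W)+\Delta(\{0,\dots,d\}-W)\ast L^*$ and matching this with the decomposition of $K^*$ into its faces inside $\{0,\dots,d\}-W$ and the rest, but this seems to introduce more notation than the direct case analysis saves.
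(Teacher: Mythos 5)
The paper does not prove this lemma --- it imports it from \cite[Lemma 4.1]{CM2} --- so there is no in-text argument to compare against; on its own terms your direct computation from the definition of $K^{*}$ is correct and complete: the case split on whether $u\in V_{\sigma}$ shows that no simplex of $K^{*}$ contains $u$ and that, for $\sigma$ avoiding $u$, membership in $K^{*}$ is exactly the membership condition for $lk(u,K)^{*_{\{0,\dots,d\}}}=lk(u,K)^{\tau}$. The one step I would tighten is your justification that $lk(u,K)\subseteq\Delta^d$: you deduce it from ``every simplex of $K$ not containing $u$ is a face of $\Delta^d$,'' but for $K=\Delta^d+u\ast lk(u,K)$ that assertion is equivalent to the very inclusion you are proving, and it genuinely fails if one reads the hypothesis literally without assuming $V_K=V_{\Delta^d}\cup\{u\}$ (e.g.\ $K$ the disjoint union of the edges $01$ and $uw$ satisfies $K=\Delta^1+u\ast lk(u,K)$ with $u\notin\Delta^1$, yet $K^{*}$ is a $4$-cycle while $lk(u,K)^{\tau}$ is a path, so the conclusion is false there). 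The lemma is stated in the paper as a result about \emph{vertex-minimal} complexes, so the intended hypothesis is $|V_K|=d+2$, i.e.\ $V_K=\{0,\dots,d\}\cup\{u\}$; take that as given rather than deriving it, and the rest of your argument --- including your handling of $\sigma=\emptyset$ and the passage from $\tau=\Delta(\{0,\dots,d\}-V_{lk(u,K)})$ to the ground set $\{0,\dots,d\}$ --- goes through without change.
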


It can be shown that $K^{\tau}$ is an $NH$-ball (resp. $NH$-sphere) if and only if $K^*$ is an $NH$-ball (resp. $NH$-sphere). This actually follows from the next result involving a slightly more general form of formula \eqref{eq:unica}, which we include here for future reference.

\begin{lema}[{\cite[Lemma 3.5]{CM2}}] \label{lema:technical} If $V_K\subset V$ and $\eta\neq\emptyset$, then $L:=\partial\eta\ast\Delta(V)+\eta\ast K$ is an $NH$-ball (resp. $NH$-sphere) if and only if $K$ is an $NH$-ball (resp. $NH$-sphere).\end{lema}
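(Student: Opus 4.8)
The plan is to induct on $d:=\dim L$, where $L=\partial\eta\ast\Delta(V)+\eta\ast K$, proving the ball and the sphere statements and both implications simultaneously (so the inductive hypothesis covers all four cases in dimension $<d$); small $d$ is checked by hand, and throughout one assumes $K\neq\emptyset$. The identities that drive the induction are: every face of $L$ containing all of $V_\eta$ has the form $\eta\ast\beta$ with $\beta\subseteq V$, and $\eta\ast\beta\in L$ precisely when $\beta\in K$, whence $lk(\eta,L)=K$; and the vertex links decompose as $lk(u,L)=\partial\eta\ast\Delta(V-u)+\eta\ast lk(u,K)$ for $u\in V_K$, as $lk(u,L)=\partial\eta\ast\Delta(V-u)$ for $u\in V\setminus V_K$, as $lk(v,L)=\partial(\eta-v)\ast\Delta(V)+(\eta-v)\ast K$ for $v\in V_\eta$ with $\dim\eta\geq 1$, and as $lk(v,L)=K$ when $\eta=\{v\}$. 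Each of these is a combinatorial ball, or is $K$ itself, or is again an instance of the construction of the lemma in dimension $<d$, so the inductive hypothesis applies to them.

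For the ``if'' direction, assume $K$ is an $NH$-ball (resp. $NH$-sphere). The link computations and the inductive hypothesis immediately give that every vertex link of $L$ is an $NH$-ball or an $NH$-sphere, so $L$ is an $NH$-manifold; the case $\eta=\{v\}$ is precisely where one needs $K$ itself --- not merely its vertex links --- to be a ball or a sphere. If $K$ is an $NH$-ball it is collapsible; after replacing $K$ by a subdivision that simplicially collapses to a point and extending this to a subdivision of $L$ that leaves $\eta$ unsubdivided, one checks that $\eta\ast K$ collapses onto $\partial\eta\ast K$ --- the faces removed are exactly those containing all of $V_\eta$, so the collapse takes place inside $L$ --- and since $\eta\ast K$ meets $\partial\eta\ast\Delta(V)$ in $\partial\eta\ast K$, this yields $L\searrow\partial\eta\ast\Delta(V)$, which is a simplicial cone with apex any vertex of $V$ and hence collapsible. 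Thus $L$ is a collapsible $NH$-manifold, i.e.\ an $NH$-ball. If $K=B+C$ is a decomposition of an $NH$-sphere, then $L=(\partial\eta\ast\Delta(V)+\eta\ast B)+(\eta\ast C)$ is the required decomposition of $L$: by the ball case already established (applied to $B$) the first summand is an $NH$-ball, of dimension $d$ since $\dim B=\dim K$; the second is a combinatorial ball; both are top generated in $L$; and $B\cap C=\partial C$ together with the boundary-of-join formula gives $(\partial\eta\ast\Delta(V)+\eta\ast B)\cap(\eta\ast C)=\partial(\eta\ast C)$.

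For the ``only if'' direction one argues the other way around. If $L$ is an $NH$-ball or $NH$-sphere it is an $NH$-manifold, so, since links of simplices in $NH$-manifolds are $NH$-balls or $NH$-spheres, $K=lk(\eta,L)$ is already an $NH$-ball or an $NH$-sphere; it only remains to decide which. A homology computation settles this: writing $L=(\partial\eta\ast\Delta(V))\cup(\eta\ast K)$ as a union of two nonempty cones with intersection $\partial\eta\ast K$, Mayer--Vietoris gives $\widetilde H_n(L)\cong\widetilde H_{n-1}(\partial\eta\ast K)$, and the formula for the reduced homology of a join shows that $\widetilde H_*(\partial\eta\ast K)$ vanishes in all degrees iff $\widetilde H_*(K)$ does; hence $L$ is acyclic iff $K$ is. Since among $NH$-balls and $NH$-spheres the acyclic ones are exactly the $NH$-balls (an $NH$-sphere of homotopy dimension $k\geq 0$ has the reduced homology of $S^k$, and the $(-1)$-sphere $\{\emptyset\}$ has $\widetilde H_{-1}\neq 0$), it follows that if $L$ is an $NH$-ball then $K$ is an $NH$-ball, and if $L$ is an $NH$-sphere then $K$ is an $NH$-sphere.

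The step I expect to be the main obstacle is the collapsibility transfer in the ball case, because ``collapsible'' here allows a preliminary subdivision: one has to produce a subdivision of $K$ collapsing to a vertex, extend it to a subdivision of $L$ compatibly with the join $\partial\eta\ast\Delta(V)$ (which then becomes $\partial\eta$ joined with a combinatorial ball, still a collapsible cone), and verify that the resulting collapse of $\eta\ast K$ onto $\partial\eta\ast K$ is still a legitimate collapse of $L$. The remaining bookkeeping --- the base of the induction and the degenerate cases $K=\{\emptyset\}$ and $V=\emptyset$ --- has to be treated directly but is routine.
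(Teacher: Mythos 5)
This lemma is imported verbatim from \cite[Lemma 3.5]{CM2} and the present paper gives no proof of it, so there is no in-document argument to compare yours against; I can only assess your reconstruction on its own terms. It is essentially correct, and it follows the route one would expect (and which \cite{CM2} in fact takes): an induction driven by the link identities $lk(\eta,L)=K$, $lk(u,L)=\partial\eta\ast\Delta(V-u)+\eta\ast lk(u,K)$ for $u\in V_K$, and $lk(v,L)=\partial(\eta-v)\ast\Delta(V)+(\eta-v)\ast K$ for $v\in V_\eta$, all of which you state correctly; the decomposition $L=(\partial\eta\ast\Delta(V)+\eta\ast B)+\eta\ast C$ with intersection $\partial\eta\ast C+\tau\ast\partial C=\partial(\eta\ast C)$ is the right one for the sphere case, and the Mayer--Vietoris/join-homology argument is a legitimate way to separate the ball and sphere alternatives in the converse direction, given the quoted fact that links of simplices in $NH$-manifolds are $NH$-balls or $NH$-spheres. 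Two points deserve explicit care. First, the step you yourself flag --- transferring collapsibility --- genuinely needs the classical PL facts that a subdivision of a subcomplex extends to the ambient complex, that collapses persist under subdivision, and that combinatorial balls are collapsible in the ``after subdivision'' sense (all in \cite{RoSa}); with those quoted, your chain $L'\searrow\partial\eta\ast D\searrow\ast$ closes. Second, a few degenerate cases need a word: $K$ must be nonvoid (for the void complex the equivalence fails, since $L$ degenerates to the combinatorial ball $\partial\eta\ast\Delta(V)$), the link $\partial\eta\ast\Delta(V-u)$ is a combinatorial \emph{sphere} rather than a ball when $V=\{u\}$, and $K=\{\emptyset\}$ should be checked directly as you indicate. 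None of these is a gap in the idea, only bookkeeping.
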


\section{Minimal $NH$-spheres}

In this section we introduce the non-pure version of $\partial\Delta^d$ and prove part ($i$) of Theorem \ref{Teo: ppal}. Recall that $\mathsf{m}(K)$ denotes the number of maximal simplices of $K$. We shall see that for a non-homogeneous sphere $S$, requesting minimality of $\mathsf{m}(S)$ is strictly stronger than requesting that of $V_S$. This is the reason why vertex-minimal $NH$-spheres are not necessarily \emph{minimal} in our sense.

To introduce minimal $NH$-spheres we note first that any complex $K$ with the homotopy type of a $k$-sphere has at least $k+2$ principal simplices. This follows from the fact that the simplicial nerve $\mathcal{N}(K)$ is homotopy equivalent to $K$.

\begin{defi} An $NH$-sphere $S$ is said to be \emph{minimal} if $\mathsf{m}(S)=\dim_h(S)+2$.\end{defi}

Note that, equivalently, an $NH$-sphere $S$ of homotopy dimension $k$ is minimal if and only if $\mathcal{N}(S)=\partial\Delta^{k+1}$.

\begin{obs} \label{lemalinkminimales} Suppose $S=B+L$ is a decomposition of a minimal $NH$-sphere of homotopy dimension $k$ and let $v\in V_L$. Then $lk(v,S)$ is an $NH$-sphere of homotopy dimension $\dim_h(lk(v,S))=k-1$ and $lk(v,S)=lk(v,B)+lk(v,L)$ is a valid decomposition (see $\S$2.2). In particular, $\mathsf{m}(lk(v,S))\geq k+1$. Also, $\mathsf{m}(lk(v,S))<k+3$ since $\mathsf{m}(S)<k+3$ and $\mathsf{m}(lk(v,S))\neq k+2$ since otherwise $S$ is a cone. Therefore, $\mathsf{m}(lk(v,S))=k+1=\dim_h(lk(v,S))+2$, which shows that $lk(v,S)$ is also a minimal $NH$-sphere.\end{obs}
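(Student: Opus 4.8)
The plan is to verify directly the two conditions in the definition of a minimal $NH$-sphere: that $lk(v,S)$ is an $NH$-sphere of homotopy dimension $k-1$, and that $\mathsf{m}(lk(v,S))=(k-1)+2=k+1$. For the first, I would appeal to the behaviour of links under decompositions recalled in $\S$\ref{Subsection:NHBallsAndSpheres}, namely \cite[Lemma 4.8]{CM}: for every $\sigma\in L$ the complex $lk(\sigma,S)$ is an $NH$-sphere with decomposition $lk(\sigma,S)=lk(\sigma,B)+lk(\sigma,L)$. Taking $\sigma=v\in V_L$ yields the asserted decomposition; and since $L$ is a combinatorial $k$-ball, $lk(v,L)$ is a combinatorial ball or sphere of dimension $k-1$, so the combinatorial-ball summand of the induced decomposition of $lk(v,S)$ has dimension $k-1$ and hence $\dim_h(lk(v,S))=k-1$. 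Because an $NH$-sphere of homotopy dimension $j$ is homotopy equivalent to $S^j$ (in a decomposition both summands are contractible and meet along the boundary sphere of the combinatorial ball), in particular $lk(v,S)\simeq S^{k-1}$.

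Next I would establish the lower bound $\mathsf{m}(lk(v,S))\geq k+1$ using the observation made just before the definition of minimal $NH$-sphere: any complex with the homotopy type of a $j$-sphere has at least $j+2$ principal simplices, since its nerve is homotopy equivalent to it and a simplicial complex homotopy equivalent to $S^j$ must have at least $j+2$ vertices. Applying this to $lk(v,S)$, which is homotopy equivalent to $S^{k-1}$ by the previous paragraph, gives $\mathsf{m}(lk(v,S))\geq k+1$.

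For the matching upper bound I would note that $\rho\mapsto v\ast\rho$ is an injection from the set of principal simplices of $lk(v,S)$ to that of $S$: indeed $v\ast\rho$ lies in $S$, and any simplex of $S$ that strictly contains $v\ast\rho$ has the form $v\ast\rho'$ with $\rho\subsetneq\rho'\in lk(v,S)$, contradicting the maximality of $\rho$, so $v\ast\rho$ is maximal in $S$; injectivity is obvious. Thus $\mathsf{m}(lk(v,S))\leq\mathsf{m}(S)=k+2$. Were this an equality, the injection would be surjective, every principal simplex of $S$ would contain $v$, and $S=st(v,S)=v\ast lk(v,S)$ would be a cone, hence collapsible and contractible --- impossible since $S$ is homotopy equivalent to $S^k$ with $k\geq 0$. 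Therefore $\mathsf{m}(lk(v,S))=k+1=\dim_h(lk(v,S))+2$, which is precisely what it means for $lk(v,S)$ to be a minimal $NH$-sphere.

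The only genuinely delicate point --- and it is one I would simply quote rather than reprove --- is the input from \cite[Lemma 4.8]{CM} used in the first paragraph: that the induced pair $\bigl(lk(v,B),lk(v,L)\bigr)$ is a legitimate decomposition (top generated summands, intersection equal to $\partial\,lk(v,L)$, and $lk(v,L)$ a combinatorial ball), together with the correct reading of the homotopy dimension in the case where $v$ is an interior vertex of $L$, so that $lk(v,L)$ is a $(k-1)$-sphere and the summand $lk(v,B)$ collapses onto it. Everything else is elementary counting of maximal simplices.
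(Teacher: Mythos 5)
Your argument is correct and follows essentially the same route as the paper's: the decomposition and homotopy dimension of $lk(v,S)$ come from the cited \cite[Lemma 4.8]{CM}, the lower bound $\mathsf{m}(lk(v,S))\geq k+1$ from the nerve argument, and the exclusion of $k+2$ from the observation that equality would force $S=v\ast lk(v,S)$ to be a cone. You merely make explicit the counting step (the bijection $\rho\mapsto v\ast\rho$ between principal simplices of $lk(v,S)$ and principal simplices of $S$ containing $v$) that the paper leaves implicit.
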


We next prove that minimal $NH$-spheres are vertex-minimal.

\begin{prop}\label{lemmavertices} If $S$ is a $d$-dimensional minimal $NH$-sphere then $|V_S|=d+2$.\end{prop}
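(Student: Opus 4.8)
The plan is to proceed by induction on the dimension $d$, using the decomposition $S=B+L$ together with the link structure captured in Remark \ref{lemalinkminimales}. The base case $d=0$ is immediate since a $0$-dimensional $NH$-sphere is a genuine $0$-sphere, which has exactly two vertices and homotopy dimension $0$. For the inductive step, fix a minimal $NH$-sphere $S$ of dimension $d\geq 1$ and homotopy dimension $k$, with a decomposition $S=B+L$. The key inequality to establish is $|V_S|\leq d+2$; the reverse inequality $|V_S|\geq d+2$ holds automatically because $S$ contains a $d$-simplex.

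First I would pick a vertex $v\in V_L$ (recall $L$ is a combinatorial $k$-ball, so it has a vertex). By Remark \ref{lemalinkminimales}, $lk(v,S)$ is a minimal $NH$-sphere of dimension at most $d-1$ and homotopy dimension $k-1$, with decomposition $lk(v,S)=lk(v,B)+lk(v,L)$. I would like to apply the inductive hypothesis to conclude $|V_{lk(v,S)}|\leq d+1$, and then argue $|V_S|=|V_{lk(v,S)}|+1\leq d+2$, which requires that every vertex of $S$ other than $v$ be joined to $v$, i.e. that $st(v,S)=S$, equivalently that $S$ is a cone over $v$. This last assertion is false in general (a sphere is not a cone), so the naive induction cannot close directly — this is the main obstacle.

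To get around this, I would instead argue that the vertices of $S$ are controlled \emph{collectively} by the links at the vertices of $L$. Concretely: since $S$ is strongly connected (homotopy dimension $\geq 1$ forces this; the case $k=0$ must be handled separately, where $S$ is a genuine combinatorial $d$-sphere with $\m(S)=2$, hence two $d$-simplices sharing their full boundary, giving exactly $d+2$ vertices), every principal simplex of $S$ shares a ridge with a principal simplex meeting $L$, and one can traverse $S$ through the boundary $\partial L=B\cap L$. The crucial point is that $\m(S)=k+2$ is so small that $B$ itself must be a simplex: indeed $B$ is a top generated $NH$-ball and $S=B+L$ with $L$ a $k$-ball contributing its own principal simplices; a counting argument on $\mathcal{N}(S)=\partial\Delta^{k+1}$ shows the principal simplices of $B$ correspond to all but one vertex of $\partial\Delta^{k+1}$, and the "star" structure forces $B$ to be a cone, hence (being also collapsible and vertex-economical) a single simplex. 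Once $B=\Delta^{d}$ is a $d$-simplex, $V_S=V_B\cup V_L$ and $B\cap L=\partial L$ is a $(k-1)$-sphere sitting inside $\partial\Delta^d$; applying the inductive hypothesis to the minimal $NH$-sphere $lk(v,S)$ for a suitable $v$ then pins down $|V_L|$ and yields $|V_S|=d+2$.

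The step I expect to be genuinely delicate is showing that the smallness of $\m(S)$ forces $B$ to be a simplex (or whatever the precise structural conclusion is): this is where one must combine the nerve characterization $\mathcal{N}(S)=\partial\Delta^{k+1}$, the fact that $L$ is \emph{combinatorial} (not just $NH$), and the top-generated hypothesis on $B$, ruling out the possibility that $B$ has many principal simplices of low dimension that would inflate the vertex count without inflating $\m(S)$ much. I would isolate this as a lemma: \emph{in a minimal $NH$-sphere $S=B+L$ with $\dim_h(S)=k$, the ball $B$ is a cone, and if moreover $\dim B=\dim S$ then $B$ is a simplex}. Granting that lemma, the vertex count is a short bookkeeping argument combined with the inductive hypothesis applied to a vertex link, and the proposition follows.
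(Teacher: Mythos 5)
Your plan correctly isolates the real difficulty (a single vertex link cannot control all of $S$ because $S$ is not a cone), but the structural lemma you propose to get around it is false, and your final step leans on it. Take $S=\partial\Delta^{d+1}$, $v$ a vertex, $B=st(v,S)=v\ast\partial\Delta^{d}$ and $L$ the facet opposite $v$: this is a valid decomposition of a minimal $NH$-sphere with $\dim B=\dim S=d$, yet $B$ has $d+1$ principal simplices and is not a simplex. (Decompositions with $1<\mathsf{m}(L)\leq k+1$ also occur, so it is not even true that the principal simplices of $B$ account for all but one vertex of $\mathcal{N}(S)$.) What \emph{is} true, and follows from $\mathcal{N}(S)=\partial\Delta^{k+1}$, is only that the at most $k+1$ principal simplices of $B$ share a common vertex, so $B$ is a cone; but the apex need not lie in $L$ (in the example above it does not), so Remark \ref{lemalinkminimales} need not apply to it and its star need not exhaust $S$. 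Hence the concluding bookkeeping does not close. Two smaller slips: a minimal $NH$-sphere of dimension $d$ and homotopy dimension $0$ is $\Delta^d$ together with a disjoint vertex, not two $d$-simplices sharing their boundary; and containing a $d$-simplex only gives $|V_S|\geq d+1$, so the lower bound also uses that $S\neq\Delta^d$.

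The paper's proof chooses the vertex more carefully: it inducts on $k$, picks a principal simplex $\eta$ of $B$ of \emph{minimal dimension}, and takes $u$ in the intersection $\Omega$ of the remaining $k+1$ principal simplices, which is non-empty precisely because $\mathcal{N}(S)=\partial\Delta^{k+1}$. Since $\eta\notin L$ this forces $u\in L$, so Remark \ref{lemalinkminimales} and the inductive hypothesis give $|V_{st(u,S)}|\leq d+2$, and by construction $st(u,S)$ contains every principal simplex except $\eta$. The genuinely delicate point is then showing that $\eta$ contributes no new vertex: strong connectivity produces a ridge $\sigma\in st(u,B)\cap\eta$, minimality of $\eta$ gives $\eta=w\ast\sigma$, and $st(u,S)\cap\eta=\sigma$ is ruled out because it would make $S$ collapsible; any strictly larger intersection already contains $w$. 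This collapsibility argument is the missing ingredient you would need to supply in place of the false ``$B$ is a simplex'' lemma.
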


\begin{proof} Let $S=B+L$ be decomposition of $S$ and set $k=\dim_h(S)$. We shall prove that $|V_S|\leq d+2$ by induction on $k$. The case $k=0$ is straightforward, so assume $k\geq 1$. Let $\eta\in B$ be a principal simplex of minimal dimension and let $\Omega$ denote the intersection of all principal simplices of $S$ different from $\eta$. Note that $\Omega\neq\emptyset$ since $\mathcal{N}(S)=\partial\Delta^{k+1}$ and let $u\in\Omega$ be a vertex. Since $\eta\notin L$ then $\Omega\subset L$ and $u\in L$. By Remark \ref{lemalinkminimales}, $lk(u,S)$ is a minimal $NH$-sphere of dimension $d'\leq d-1$ and homotopy dimension $k-1$. By inductive hypothesis, $|V_{lk(u,S)}|\leq d'+2\leq d+1$. Hence, $st(u,S)$ is a top generated subcomplex of $S$ with $k+1$ principal simplices and at most $d+2$ vertices. By construction, $S=st(u,S)+\eta$. We claim that $V_{\eta}\subset V_{st(u,S)}$. Since $B=st(u,B)+\eta$, by strong connectivity there is a ridge $\sigma\in B$ in $st(u,B)\cap\eta$ (see $\S$2.2). By the minimality of $\eta$ we must have $\eta=w\ast\sigma$ for some vertex $w$. Now, $\sigma\in st(u,B)\cap\eta \subset st(u,S)\cap\eta$; but $st(v,S)\cap \eta\neq\sigma$ since, otherwise, $S=st(u,S)+\eta\searrow st(u,S)\searrow u$, contradicting the fact that $S$ has the homotopy type of a sphere. We conclude that $w\in st(u,S)$ since every face of $\eta$ different from $\sigma$ contains $w$.  Thus, $|V_S|=|V_{st(u,S)}\cup V_{\eta}|=|V_{st(u,S)}|\leq d+2$.\end{proof}

This last proposition shows that, in the non-pure setting, requesting the minimality of $\mathsf{m}(S)$ is strictly more restrictive than requesting that of $|V_S|$. For example, a vertex-minimal $NH$-sphere can be constructed from \emph{any} $NH$-sphere $S$ and a vertex $u\notin S$ by the formula $\tilde{S}:=\Delta_S+u\ast S$. It is easy to see that if $S$ is not minimal, neither is $\tilde{S}$.

\begin{obs} \label{u is in L spheres} By Proposition \ref{lemmavertices}, a $d$-dimensional minimal $NH$-sphere $S$ may be written $S=\Delta^d+u\ast lk(u,S)$ for some $u\notin\Delta^d$. Note that for any decomposition $S=B+L$, the vertex $u$ must lie in $L$ (since this last complex is top generated). In particular, $lk(u,S)$ is a minimal $NH$-sphere by Remark \ref{lemalinkminimales}.\end{obs}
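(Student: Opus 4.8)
The plan is to prove the three assertions of the Remark in order. For the first — that $S=\Delta^d+u\ast lk(u,S)$ for some $u\notin\Delta^d$ — I would combine Proposition~\ref{lemmavertices} with $\dim S=d$: since $|V_S|=d+2$ and $S$ contains a $d$-simplex, that simplex spans all but one vertex $u$ of $V_S$, so it is $\Delta(V_S\setminus\{u\})=:\Delta^d$ and $u\notin\Delta^d$. Splitting the simplices of $S$ according to whether they contain $u$ gives $S=S_0+u\ast lk(u,S)$ with $S_0=\{\tau\in S:u\notin\tau\}$; but $V_{S_0}\subseteq V_S\setminus\{u\}=V_{\Delta^d}$ forces $S_0\subseteq\Delta^d$, and $\Delta^d\in S_0$ gives the reverse inclusion, so $S_0=\Delta^d$. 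Call a vertex $u$ of $S$ an \emph{apex} when $\Delta(V_S\setminus\{u\})\in S$; the computation above shows apexes exist and each produces such a writing.

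The second assertion is the crux. I would first record that the $d$-simplex $\Delta^d=\Delta(V_S\setminus\{u\})$ is the \emph{unique} principal simplex of $S$ not containing $u$: a principal simplex of $S$ avoiding $u$ has all its vertices in $V_{\Delta^d}$, hence is a face of $\Delta^d\in S$, hence equals $\Delta^d$. So every other principal simplex of $S$ contains $u$. Now fix any decomposition $S=B+L$ and put $k=\dim_h(S)$. If $k<d$: the principal simplices of the top generated combinatorial $k$-ball $L$ are $k$-dimensional simplices principal in $S$, hence none of them is $\Delta^d$, hence each contains $u$; since $L\neq\emptyset$ it has a principal simplex, so $u\in V_L$ — and this holds for every apex $u$. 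If $k=d$: either $V_L=V_S$, and every apex lies in $V_L$; or $V_L\subsetneq V_S$, forcing $|V_L|=d+1$ (a combinatorial $d$-ball has at least $d+1$ vertices), and then $L$, being a $d$-dimensional subcomplex of the $d$-simplex $\Delta(V_L)$, must equal $\Delta(V_L)=\Delta(V_S\setminus\{w\})$ for a single vertex $w$ — since $\Delta(V_L)\notin\partial L=B\cap L$ we get $\Delta(V_L)\notin B$, so a $d$-simplex of $B$ (one exists because $\dim B=d$) is of the form $\Delta(V_S\setminus\{u\})$ with $u\neq w$, exhibiting an apex $u\in V_L=V_S\setminus\{w\}$. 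In every case one may take $u\in V_L$. I expect the subcase $k=d$ to be the only real obstacle: there $L$ can be a single facet of $S$, whose opposite vertex is an apex lying \emph{outside} $V_L$, so one must fall back on a $d$-simplex carried by $B$ to relocate the apex into $V_L$.

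Finally, with $u\in V_L$, the results recalled in $\S 2.2$ show that $lk(u,S)=lk(u,B)+lk(u,L)$ is a valid decomposition of the $NH$-sphere $lk(u,S)$, and Remark~\ref{lemalinkminimales} applied with $v=u$ gives that $lk(u,S)$ is a minimal $NH$-sphere, of homotopy dimension $\dim_h(S)-1$.
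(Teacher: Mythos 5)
Your proof is correct, and its core is the same as the paper's implicit one-line justification: since $L$ is top generated and the only principal simplex of $S$ not containing an apex $u$ is $\Delta^d$ itself, every principal simplex of $L$ contains $u$ whenever $\dim_h(S)<d$. Where you genuinely go beyond the paper is the case $k=\dim_h(S)=d$, and you are right that it is a real issue with the literal statement: there $S=\partial\Delta^{d+1}$, every vertex is an apex, and the decomposition $L=\Delta(V_S\setminus\{w\})$, $B=w\ast\partial L$ has the apex $w\notin L$, so ``the vertex $u$ must lie in $L$'' fails for that particular writing; one must instead pass to an apex carried by a $d$-simplex of $B$, exactly as you do. The paper's parenthetical ``since this last complex is top generated'' only covers the non-homogeneous case. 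The discrepancy is harmless downstream, since in the exceptional case $lk(u,S)=\partial\Delta^{d}$ is a minimal $NH$-sphere for \emph{every} vertex $u$; but your formulation --- for any decomposition $S=B+L$ there is a choice of apex $u$ with $u\in V_L$ --- is the statement that is actually true and it suffices everywhere the remark is invoked.
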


As we mentioned above, the Alexander duals play a key role in characterizing minimal $NH$-spheres. We now turn to prove Theorem \ref{Teo: ppal} ($i$). We derive first the following corollary of Proposition \ref{lemmavertices}.

\begin{coro}\label{corollarydropsvertices} If $S$ is a minimal $NH$-sphere then $|V_{S^*}| < |V_S|$ and $\dim(S^*)<\dim(S)$.\end{coro}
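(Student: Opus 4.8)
The statement to prove is Corollary \ref{corollarydropsvertices}: if $S$ is a minimal $NH$-sphere, then $|V_{S^*}| < |V_S|$ and $\dim(S^*) < \dim(S)$. The natural strategy is to combine Proposition \ref{lemmavertices} (vertex-minimality, $|V_S| = d+2$ where $d = \dim(S)$) with Remark \ref{u is in L spheres}, which writes $S = \Delta^d + u\ast lk(u,S)$ for some $u\notin\Delta^d$, and then apply Lemma \ref{lema:dualvertexminimal} to compute $S^*$ explicitly. First I would set $L := lk(u,S)$, note that $L$ is itself a minimal $NH$-sphere (Remark \ref{u is in L spheres}), say of dimension $d'$, and that by Lemma \ref{lema:dualvertexminimal} we have $S^* = L^{\tau}$ where $\tau = \Delta(V_S - V_{st(u,S)})$.

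Next I would count vertices. The key point is that $V_{S^*} \subseteq V_S$ but the vertex $u$ is \emph{not} a vertex of $S^* = L^{\tau}$: by formula \eqref{eq:unica}, $L^{\tau} = \partial\tau \ast \Delta_L + \tau \ast L^*$, and since $u \notin V_L$ and $u \notin V_{st(u,S)}$ would need checking — actually $u$ \emph{is} in $st(u,S)$, so $u \notin V_\tau$; and $u \notin V_L$ by definition of link; hence $u \notin V_{L^\tau} = V_\tau \cup V_L$. (One must be slightly careful here: $V_{S^*}$ could in principle fail to contain some vertices of $V_S$ too, which only helps. The essential fact is $u\notin V_{S^*}$.) Therefore $|V_{S^*}| \leq |V_S| - 1 = d+1 < |V_S|$, giving the first inequality.

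For the dimension bound, I would use Alexander duality together with the vertex count. Since $|V_{S^*}| = n' \leq d+1$ with $n := |V_S| = d+2$, every simplex of $S^*$ has dimension at most $n' - 1 \leq d$; but I need the strict bound $\dim(S^*) < d$, i.e.\ $\dim(S^*) \leq d-1$. This follows because $S^* \subsetneq \Delta_{S^*}$ (the Alexander dual is never the full simplex on its own vertex set when $S\neq\{\emptyset\}$, since $(\Delta^d)^* = \emptyset$ shows the full simplex dualizes to nothing and by the involution $S^*$ cannot be a full simplex), so $\dim(S^*) \leq |V_{S^*}| - 2 \leq d - 1 < d$. Alternatively, and perhaps more cleanly, $S^*$ is an $NH$-ball or $NH$-sphere of homotopy dimension related to that of $S$ via Alexander duality ($\widetilde{H}_i(S^*) \cong \widetilde{H}^{n-i-3}(S)$ with $n = d+2$), and combining this with $|V_{S^*}|\le d+1$ pins down the dimension; I would present whichever of these is shortest.

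**Main obstacle.** The only delicate point is verifying carefully that $u \notin V_{S^*}$ — that is, tracking through Lemma \ref{lema:dualvertexminimal} and formula \eqref{eq:unica} to confirm the vertex set of $S^* = L^\tau$ is exactly $V_L \cup V_\tau = V_S \setminus \{u\}$ (using that $V_{st(u,S)} = \{u\} \cup V_L$, hence $V_\tau = V_S \setminus (\{u\}\cup V_L)$, so $V_L \cup V_\tau = V_S\setminus\{u\}$), and ruling out that $L^\tau$ might accidentally be the full simplex on those vertices (it is not, since $L$ is a nonempty $NH$-sphere so $L^* \neq \emptyset$ but $L^* \neq \Delta_L$). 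Everything else is routine bookkeeping with the formulas already established in the excerpt.
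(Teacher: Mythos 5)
Your first inequality is fine and is essentially the paper's argument: writing $S=\Delta^d+u\ast lk(u,S)$ via Proposition \ref{lemmavertices}, one checks that $u\notin V_{S^*}$ (the paper gets this directly from the definition of the dual, since $\Delta(V_S-\{u\})=\Delta^d\in S$; your detour through Lemma \ref{lema:dualvertexminimal} and formula \eqref{eq:unica} reaches the same conclusion). The gap is in the second inequality. Your primary justification for $S^*\subsetneq\Delta_{S^*}$ rests on the claim that ``the Alexander dual is never the full simplex on its own vertex set,'' deduced from $(\Delta^d)^*=\emptyset$ ``by the involution.'' That claim is false, and the involution argument behind it is invalid: the identity recalled in \S 2.3 is $(K^*)^{\Delta(V_K-V_{K^*})}=K$, i.e.\ the double dual recovers $K$ only when the second dual is taken relative to the \emph{original} vertex set; taking it relative to $V_{K^*}$ (which is what the implication ``$S^*$ a full simplex $\Rightarrow (S^*)^*=\emptyset=S$'' would need) is a different operation precisely because, as you have just shown, $V_{S^*}\subsetneq V_S$. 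The paper itself supplies counterexamples to your general principle: right after Theorem \ref{Teo: ppal} it is noted that $K^*=\Delta^d$ exactly when $K$ is a vertex-minimal $d$-ball that is not a simplex (concretely, the path with edges $\{0,2\},\{1,2\}$ has Alexander dual equal to the single vertex $\{2\}$, the full $0$-simplex on its own vertex set). Your fallback parenthetical ($L^*\neq\Delta_L$) does not close the hole either: when $\tau=\emptyset$ one needs $L^*$ not to be the full simplex on its \emph{own} vertex set $V_{L^*}$, which is the same unproved claim one dimension down, and the path example shows $L^*\neq\Delta_L$ is compatible with $L^*$ being a full simplex on $V_{L^*}$.

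So something specific to spheres must be invoked, and that is exactly what the paper does: by combinatorial Alexander duality $\widetilde H_i(S^*)\simeq \widetilde H^{n-i-3}(S)$, and since $S$ has the (nontrivial) reduced homology of a sphere, $S^*$ cannot be a simplex; combined with $\dim(S^*)\le |V_{S^*}|-1\le d$, where equality would force $S^*$ to be a full $d$-simplex, this yields $\dim(S^*)<d$. You do gesture at this route as an ``alternative'' and say you would present ``whichever is shortest,'' but the short one is wrong, and the Alexander-duality argument is not optional --- it is the step that makes the corollary true. As written, the proposal has a genuine gap here.
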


\begin{proof} $V_{S^*}\subsetneq V_S$ follows from Proposition \ref{lemmavertices} since if $S=\Delta^d+u\ast lk(u,S)$ then $u\notin S^*$. In particular, this implies that $\dim(S^*)\neq\dim(S)$ since $S^*$ is not a simplex by Alexander duality.\end{proof}

\begin{teo}	\label{corollarydualitypreservesminimality}

Let $K$ be a finite simplicial complex and let $\tau$ be a simplex (possibly empty) disjoint from $K$. Then, $K$ is a minimal $NH$-sphere if and only if $K^{\tau}$ is a minimal $NH$-sphere. That is, the class of minimal $NH$-spheres is closed under taking Alexander dual.

\begin{proof} Assume first that $K$ is a minimal $NH$-sphere and set $d=\dim(K)$. We proceed by induction on $d$. By Proposition \ref{lemmavertices}, we can write $K=\Delta^d+u\ast lk(u,K)$ for $u\notin\Delta^d$. If $\tau=\emptyset$ then, by Lemma \ref{lema:dualvertexminimal}, $K^*=lk(u,K)^{\rho}$ for $\rho=\Delta(V_K-V_{st(u,K)})$. By Remark \ref{u is in L spheres}, $lk(u,K)$ is a minimal $NH$-sphere. Therefore, $K^*=lk(u,K)^{\rho}$ is a minimal $NH$-sphere by inductive hypothesis. If $\tau\neq\emptyset$, $K^{\tau} = \partial\tau\ast\Delta_K+\tau\ast K^*$ by formula \eqref{eq:unica}. In particular, $K^{\tau}$ is an $NH$-sphere by Lemma \ref{lema:technical} and the case $\tau=\emptyset$.  Now, by Alexander duality,
$$\dim_h(K^{\tau})=|V_K\cup V_{\tau}|-\dim_h(K)-3=|V_K|+|V_{\tau}|-\dim_h(K)-3=\dim_h(K^*)+|V_{\tau}|.$$ On the other hand, $$\mathsf{m}(K^{\tau})=\mathsf{m}(\partial\tau\ast\Delta_K+\tau\ast K^*)=\mathsf{m}(\partial\tau)+\mathsf{m}(K^*)=|V_{\tau}|+\dim_h(K^*)+2,$$ where the last equality follows from the case $\tau=\emptyset$. This shows that $S^{\tau}$ is minimal.

Assume now that $K^{\tau}$ is a minimal $NH$-sphere. If $\tau\neq\emptyset$ then $K=(K^{\tau})^*$ and if $\tau=\emptyset$ then $K=(K^*)^{\Delta(V_K-V_{K^*})}$ (see \S 2.3). In any case, the result follows immediately from the previous implication.\end{proof}

\end{teo}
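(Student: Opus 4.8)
The plan is to prove the biconditional by establishing both directions via induction on $d = \dim(K)$, exploiting the vertex-minimal structure guaranteed by Proposition~\ref{lemmavertices} together with the reduction formula \eqref{eq:unica}.

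For the forward direction, I would first dispose of the case $\tau = \emptyset$. Since a minimal $NH$-sphere is vertex-minimal, write $K = \Delta^d + u \ast lk(u,K)$; then Lemma~\ref{lema:dualvertexminimal} identifies $K^*$ with $lk(u,K)^{\rho}$ for an explicit $\rho$. By Remark~\ref{u is in L spheres}, $lk(u,K)$ is itself a minimal $NH$-sphere (of strictly smaller dimension), so the inductive hypothesis applied to $lk(u,K)$ with the simplex $\rho$ yields that $K^*$ is a minimal $NH$-sphere. For general $\tau \neq \emptyset$, formula \eqref{eq:unica} gives $K^\tau = \partial\tau \ast \Delta_K + \tau \ast K^*$, and Lemma~\ref{lema:technical} (with $\eta = \tau$ and $K^*$ in the role of the inner complex) shows $K^\tau$ is an $NH$-sphere since $K^*$ is. It then remains to verify the numerical minimality condition $\mathsf{m}(K^\tau) = \dim_h(K^\tau) + 2$: compute $\dim_h(K^\tau)$ from combinatorial Alexander duality (the homotopy dimension shifts by $|V_\tau|$ relative to $\dim_h(K^*)$, since $|V_{K^\tau}| = |V_K| + |V_\tau|$), and compute $\mathsf{m}(K^\tau) = \mathsf{m}(\partial\tau) + \mathsf{m}(K^*) = |V_\tau| + \mathsf{m}(K^*)$, using that the maximal simplices of a join/union decomposition of this shape are exactly the maximal simplices of the two pieces. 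Combining with the $\tau = \emptyset$ base case ($\mathsf{m}(K^*) = \dim_h(K^*) + 2$) closes the induction.

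The reverse direction is then essentially free: if $K^\tau$ is a minimal $NH$-sphere, then $K$ is recovered as an Alexander dual of $K^\tau$ — namely $K = (K^\tau)^*$ when $\tau \neq \emptyset$, and $K = (K^*)^{\Delta(V_K - V_{K^*})}$ when $\tau = \emptyset$ (using the identities from \S2.3 that Alexander duality is an involution up to the choice of ground set, valid since $K^\tau$ is not a simplex) — so the forward implication already proved, applied to $K^\tau$, gives that $K$ is a minimal $NH$-sphere.

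The main obstacle I anticipate is the bookkeeping in the $\tau = \emptyset$ case of the forward direction — specifically, confirming that the simplex $\rho = \Delta(V_K - V_{st(u,K)})$ supplied by Lemma~\ref{lema:dualvertexminimal} is genuinely disjoint from $lk(u,K)$ so that the inductive hypothesis actually applies to the pair $(lk(u,K), \rho)$, and checking that $\dim(lk(u,K)) < d$ so the induction is well-founded. The first point follows because $st(u,K) \supseteq lk(u,K)$ forces $V_{lk(u,K)} \cap (V_K - V_{st(u,K)}) = \emptyset$; the second follows from Proposition~\ref{lemmavertices} combined with the fact that $u$ together with $V_{lk(u,K)}$ accounts for all but the vertices of $\Delta^d$ not in $st(u,K)$. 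The numerical verification in the $\tau \neq \emptyset$ step is routine once one is careful that $K^*$ is viewed as a subcomplex of $\Delta_K$ so that $\partial\tau \ast \Delta_K$ contributes exactly $\mathsf{m}(\partial\tau) = |V_\tau|$ maximal simplices disjoint (as maximal faces) from those of $\tau \ast K^*$.
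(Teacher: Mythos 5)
Your proposal is correct and follows essentially the same route as the paper: induction on $\dim(K)$, reduction of the $\tau=\emptyset$ case to $lk(u,K)^{\rho}$ via Lemma \ref{lema:dualvertexminimal} and Remark \ref{u is in L spheres}, the $\tau\neq\emptyset$ case via formula \eqref{eq:unica} and Lemma \ref{lema:technical} plus the same numerical count of $\mathsf{m}$ and $\dim_h$, and the reverse direction by the involution identities from \S 2.3. The extra care you take with the disjointness of $\rho$ from $lk(u,K)$ and the well-foundedness of the induction is sound and only makes explicit what the paper leaves implicit.
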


\begin{proof}[Proof of Theorem \ref{Teo: ppal} \normalfont\text{($i$)}] Suppose first that $K$ is a minimal $NH$-sphere. By Theorem \ref{corollarydualitypreservesminimality}, every non-empty complex in the sequence $\{K^{*(m)}\}_{m\in\mathbb{N}_0}$ is a minimal $NH$-sphere. By Corollary \ref{corollarydropsvertices}, $|V_{K^{*(m+1)}}|<|V_{K^{*(m)}}|$ for all $m$ such that $K^{*(m)}\neq\{\emptyset\}$. Therefore, $K^{*(m_0)}=\{\emptyset\}$ for some $m_0<|V_K|$ and hence $K^{*(m_0-1)}=\partial\Delta^d$ for some $d\geq 1$.

Assume now that $K^{*(m)}=\partial\Delta^d$ for some $m\in\mathbb{N}_0$ and $d\geq 1$. We proceed by induction on $m$. The case $m=0$ corresponds to the trivial case $K=\partial\Delta^d$. For $m\geq 1$, the result follows immediately from Theorem \ref{corollarydualitypreservesminimality} and the inductive hypothesis.\end{proof}

\section{Minimal $NH$-balls}

We now develop the notion of minimal $NH$-ball. The definition in this case is a little less straightforward that in the case of spheres because there is no piecewise-linear-equivalence argument in the construction of non-pure balls. To motivate the definition of minimal $NH$-ball, recall that for a non-empty simplex $\tau\in K$ and a vertex $a\notin K$, the \emph{elementary starring} $(\tau,a)$ of $K$ is the operation which transforms $K$ in $(\tau,a)K$ by removing $\tau\ast lk(\tau,K)=st(\tau,K)$ and replacing it with $a\ast\partial\tau\ast lk(\tau,K)$. Note that when $\dim(\tau)=0$ then $(\tau,a)K$ is isomorphic to $K$.

\begin{lema}\label{Lema:equivalencesOfMinimalBalls}

	Let $B$ be a combinatorial $d$-ball. The following statements are equivalent.\begin{enumerate}
	\item $|V_B|\leq d+2$ (i.e. $B$ is vertex-minimal).
	\item $B$ is an elementary starring of $\Delta^d$.
	\item There is a combinatorial $d$-ball $L$ such that $B+L=\partial\Delta^{d+1}$.
	\end{enumerate}
	
	\begin{proof} We first prove that (1) implies (2) by induction on $d$. Since $\Delta^d$ is trivially a starring of any of its vertices, we may assume $|V_B|=d+2$ and write $B=\Delta^d+u\ast lk(u,B)$ for $u\notin\Delta^d$. Since $lk(u,B)$ is necessarily a vertex-minimal ($d-1$)-combinatorial ball then $lk(u,B)=(\tau,a)\Delta^{d-1}$ by inductive hypothesis. It follows from an easy computation that $B$ is isomorphic to $(u\ast\tau,a)\Delta^d$.
	
	We next prove that (2) implies (3). We have $$B=(\tau,a)\Delta^d=a\ast\partial\tau\ast lk(\tau,\Delta^d)=a\ast\partial\tau\ast\Delta^{d-\dim(\tau)-1}=\partial\tau\ast\Delta^{d-\dim(\tau)}.$$ Letting $L:=\tau\ast\partial\Delta^{d-\dim(\tau)}$ we get the statement of (3).
	
	The other implication is trivial.\end{proof}
\end{lema}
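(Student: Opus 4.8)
The plan is to prove the chain of equivalences $(1)\Rightarrow(2)\Rightarrow(3)\Rightarrow(1)$, since $(3)\Rightarrow(1)$ is immediate (a combinatorial $d$-ball appearing as a top-generated half of $\partial\Delta^{d+1}$ can use at most the $d+2$ vertices of $\Delta^{d+1}$). The genuinely substantive step is $(1)\Rightarrow(2)$, and I would prove it by induction on $d$. If $B$ is a simplex we are done trivially, since $\Delta^d$ is the starring $(v,a)\Delta^d$ at any vertex $v$ (recalling from the paragraph above that $0$-dimensional starrings give isomorphic complexes). So assume $|V_B| = d+2$ and pick $u \in V_B$ not in some fixed facet; then $B = \Delta^d + u * lk(u,B)$. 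The key observation is that $lk(u,B)$ is a combinatorial $(d-1)$-ball (because $B$ is a combinatorial manifold with boundary and $u \in \partial B$), and it has at most $d+1$ vertices, hence is vertex-minimal in dimension $d-1$. By the inductive hypothesis, $lk(u,B) = (\tau,a)\Delta^{d-1}$ for some nonempty simplex $\tau$ and vertex $a$.

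The heart of the argument is then the computation identifying $B$ as $(u*\tau, a)\Delta^d$. Here I would spell out that inside $\Delta^d = \Delta(\{u\} \cup V_{\Delta^{d-1}})$, the star $st(u*\tau,\Delta^d) = u*\tau*lk(u*\tau,\Delta^d) = u * \tau * \Delta^{d-1-\dim\tau}$, and removing this and gluing in $a * \partial(u*\tau) * lk(u*\tau,\Delta^d) = a * \partial(u*\tau)*\Delta^{d-1-\dim\tau}$ reproduces exactly $\Delta^d + u*lk(u,B)$; the verification uses $\partial(u*\tau) = (u*\partial\tau) + \partial u * \tau = (u * \partial\tau) + \tau$ together with the inductive description of $lk(u,B)$. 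This is the step I expect to be the main obstacle — not because it is deep, but because one must carefully track which simplices of $\Delta^d$ survive the starring and match them against the simplices of $\Delta^d + u * lk(u,B)$, and the degenerate bookkeeping (e.g. the case where $\tau$ is a vertex, or where $\dim\tau = d-1$) needs a clean uniform treatment.

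For $(2)\Rightarrow(3)$ the computation is already essentially exhibited in the claim I am proving: writing $B = (\tau,a)\Delta^d = a * \partial\tau * lk(\tau,\Delta^d)$ and using $lk(\tau,\Delta^d) = \Delta^{d-\dim(\tau)-1}$, we get $B = a * \partial\tau * \Delta^{d-\dim(\tau)-1} = \partial\tau * (a * \Delta^{d-\dim(\tau)-1}) = \partial\tau * \Delta^{d-\dim(\tau)}$. Then setting $L := \tau * \partial\Delta^{d-\dim(\tau)}$ one checks $B + L = \partial\tau * \Delta^{d-\dim(\tau)} + \tau * \partial\Delta^{d-\dim(\tau)} = \partial(\tau * \Delta^{d-\dim(\tau)}) = \partial\Delta^{d+1}$, using that $\tau * \Delta^{d-\dim(\tau)} = \Delta^{d+1}$ and the standard identity $\partial(A*C) = \partial A * C + A * \partial C$; moreover $L$ is a combinatorial $d$-ball since $\tau * \partial\Delta^{d-\dim(\tau)}$ is a cone, hence collapsible, and it is a PL manifold of the right dimension with boundary. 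Finally $B \cap L = \partial\tau * \partial\Delta^{d-\dim(\tau)} = \partial L$, so this is a legitimate decomposition. Closing the loop with the trivial $(3)\Rightarrow(1)$ completes the proof.
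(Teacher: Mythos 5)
Your proposal is correct and follows essentially the same route as the paper: the same chain $(1)\Rightarrow(2)\Rightarrow(3)\Rightarrow(1)$, the same induction on $d$ via the decomposition $B=\Delta^d+u\ast lk(u,B)$ and the identification $B\cong(u\ast\tau,a)\Delta^d$, and the same choice $L=\tau\ast\partial\Delta^{d-\dim(\tau)}$ for $(2)\Rightarrow(3)$. (One cosmetic slip in your expanded computation: $lk(u\ast\tau,\Delta^d)=\Delta^{d-\dim(\tau)-2}$, not $\Delta^{d-1-\dim(\tau)}$; this does not affect the argument.)
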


\begin{defi} An $NH$-ball $B$ is said to be \emph{minimal} if there exists a minimal $NH$-sphere $S$ that admits a decomposition $S=B+L$.\end{defi}
	
Note that if $B$ is a minimal $NH$-ball and $S=B+L$ is a decomposition of a minimal $NH$-sphere then, by Remark \ref{lemalinkminimales}, $lk(v,B)$ is a minimal $NH$-ball for every $v\in B\cap L$ (see $\S$\ref{Subsection:NHBallsAndSpheres}). Note also that the intersection of all the principal simplices of $B$ is non-empty since $\mathcal{N}(B)\subsetneq\mathcal{N}(S)=\partial\Delta^{k+1}$. Therefore, $\mathcal{N}(B)$ is a simplex. The converse, however, is easily seen to be false.

The proof of Theorem \ref{Teo: ppal} ($ii$) will follow the same lines as its version for $NH$-spheres.

\begin{prop}\label{lemmavertices2} If $B$ is a $d$-dimensional minimal $NH$-ball then $|V_B|\leq d+2$.\end{prop}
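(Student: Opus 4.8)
The plan is to deduce the statement directly from its sphere counterpart, Proposition~\ref{lemmavertices}. First I would unwind the hypothesis: since $B$ is a minimal $NH$-ball, by definition there is a minimal $NH$-sphere $S$ admitting a decomposition $S = B + L$, so that $L$ is a top generated combinatorial $k$-ball, $B$ is top generated in $S$, $B + L = S$ and $B \cap L = \partial L$. The point I would emphasize is that, by the very definition of a decomposition of an $NH$-sphere, the $NH$-ball occurring in it has the same dimension as the sphere; hence $\dim(S) = \dim(B) = d$, i.e.\ $S$ is a $d$-dimensional minimal $NH$-sphere.

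Next I would simply invoke Proposition~\ref{lemmavertices}, which yields $|V_S| = d+2$. Since $B$ is a subcomplex of $S$ we have $V_B \subseteq V_S$, and therefore $|V_B| \leq |V_S| = d+2$, which is precisely the claim.

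As for the main obstacle: there is essentially none beyond what is already packaged into Proposition~\ref{lemmavertices}; the only thing to be careful about is the dimension bookkeeping, namely that a decomposition of an $NH$-sphere forces $\dim B = \dim S$, and that we only need the inclusion $V_B \subseteq V_S$ rather than equality (indeed $L$ may contribute interior vertices not lying in $B$, so $V_B$ can be a proper subset). If one preferred a self-contained argument mirroring the proof of Proposition~\ref{lemmavertices}, one could instead induct on $\dim_h(S)$, passing to $lk(u,B)$ for a vertex $u$ in the (nonempty) intersection of all principal simplices of $B$ and using that such links are again minimal $NH$-balls together with Remark~\ref{lemalinkminimales}; but given the sphere case this is unnecessary.
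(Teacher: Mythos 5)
Your proof is correct and follows exactly the paper's argument: the paper also deduces the statement from Proposition~\ref{lemmavertices} by observing that $\dim(B)=\dim(S)$ for any decomposition $S=B+L$ of an $NH$-sphere, so that $|V_B|\leq|V_S|=d+2$. Your write-up merely makes the dimension bookkeeping and the inclusion $V_B\subseteq V_S$ explicit.
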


\begin{proof} This follows immediately from Proposition \ref{lemmavertices} since $\dim(B)=\dim(S)$ for any decomposition $S=B+L$ of an $NH$-sphere.\end{proof}

\begin{coro}\label{corollarydropsvertices2} If $B$ is a minimal $NH$-ball then $|V_{B^*}| < |V_B|$ and $\dim(B^*)<\dim(B)$.\end{coro}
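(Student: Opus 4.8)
The plan is to imitate the proof of Corollary \ref{corollarydropsvertices}, substituting Proposition \ref{lemmavertices2} for Proposition \ref{lemmavertices}. The bound on the number of vertices is obtained exactly as in the sphere case; the one genuine difference is the strict inequality on dimensions, which for balls I would read off directly from the definition of the Alexander dual rather than from Alexander duality, since $B^{*}$ is contractible (see \cite{CM2}) and hence a homological argument is of no help here.

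First I would dispose of the degenerate case: if $B=\Delta^d$ is a simplex, then $B^{*}=(\Delta^d)^{*}=\emptyset$ and both inequalities hold trivially. Otherwise $B$ is $d$-dimensional with $|V_B|\le d+2$ by Proposition \ref{lemmavertices2}, and since a $d$-dimensional complex on $d+1$ vertices is necessarily a simplex, we must have $|V_B|=d+2$. Fix any $d$-simplex $\Delta^d\in B$; it omits a single vertex $u\in V_B\setminus V_{\Delta^d}$.

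The vertex estimate is then immediate: computing $B^{*}$ relative to $V_B$ straight from the definition, $\{u\}\in B^{*}$ if and only if $\Delta(V_B\setminus\{u\})\notin B$, i.e.\ if and only if $\Delta^d\notin B$, which is false; hence $u\notin V_{B^{*}}$, so $V_{B^{*}}\subseteq V_B\setminus\{u\}\subsetneq V_B$ and $|V_{B^{*}}|\le d+1<|V_B|$. For the dimension, I would observe that any $d$-simplex $\sigma\subseteq\Delta(V_B)$ omits a single vertex $w\in V_B$, so $\sigma\in B^{*}$ would force $\Delta(V_B\setminus V_\sigma)=\{w\}$ to fail to be a face of $B$, which is absurd since $w\in V_B$. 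Thus $B^{*}$ contains no $d$-simplex, and a fortiori no simplex of dimension $\ge d$, so $\dim B^{*}\le d-1<d=\dim B$. I do not foresee a real obstacle; the only thing to get right is that the drop in dimension cannot be deduced from $B^{*}$ being non-acyclic (it is acyclic, indeed contractible), and instead follows from the elementary remark that a top-dimensional simplex of $\Delta(V_B)$ never belongs to an Alexander dual taken over the whole vertex set $V_B$.
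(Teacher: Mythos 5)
Your proof is correct, and while the vertex inequality follows the paper verbatim, the dimension inequality is obtained by a genuinely different route. The paper argues by contradiction: if $\dim(B^*)=\dim(B)=d$ then, since $|V_{B^*}|\leq d+1$, the dual $B^*$ would have to be the simplex $\Delta^d$, and the double-dual formula \eqref{eq:unica} would give $B=(B^*)^{\rho}=\partial\rho\ast\Delta^d$ with $\rho$ a single vertex, i.e. $B=\Delta^d$, contradicting the standing assumption $B\neq\Delta^d$. You instead read the inequality directly off the definition of the Alexander dual: a $d$-simplex of $\Delta(V_B)$ omits exactly one vertex $w$ of $V_B$, and since $\{w\}$ is a face of $B$ such a simplex can never lie in $B^*$; hence $\dim(B^*)\leq d-1$. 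Your argument is shorter, avoids formula \eqref{eq:unica} entirely, and in fact establishes the general fact that $\dim(K^{*_{V_K}})\leq |V_K|-3$ for any complex $K$, which simultaneously covers the sphere case of Corollary \ref{corollarydropsvertices} without invoking Alexander duality there either. You are also right that the homological argument used for spheres is unavailable for balls since $B^*$ is acyclic; both your proof and the paper's sidestep this, in different ways. The only ingredient from the theory of minimal $NH$-balls that either proof needs is the equality $|V_B|=d+2$ for $B\neq\Delta^d$, supplied by Proposition \ref{lemmavertices2}.
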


\begin{proof} We may assume $B\neq\Delta^d$. $V_{B^*}\subsetneq V_B$ by the same reasoning made in the proof of Corollary \ref{corollarydropsvertices}. Also, if $\dim(B)=\dim(B^*)$ then $B^*=\Delta^d$. By formula \eqref{eq:unica},  $B=(B^*)^{\rho}=\partial\rho\ast\Delta^d$ where $\rho=\Delta(V_B-V_{B^*})$, which is a contradiction since $|V_B|=d+2$.\end{proof}

\begin{obs}\label{u is in L balls} The same construction that we made for minimal $NH$-spheres shows that vertex-minimal $NH$-balls need not be minimal. Also, similarly to the case of non-pure spheres, if $B=\Delta^d+u\ast lk(u,B)$ is a minimal $NH$-ball which is not a simplex then for any decomposition $S=B+L$ of a minimal $NH$-sphere we have $u\in L$. In particular, since $lk(u,S)=lk(u,B)+lk(u,L)$ is a valid decomposition of a minimal $NH$-sphere, then
$lk(u,B)$ is a minimal $NH$-ball (see Remark \ref{u is in L spheres}).\end{obs}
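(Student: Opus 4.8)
The statement bundles three assertions: \textbf{(a)} the construction used for minimal $NH$-spheres also produces vertex-minimal $NH$-balls that are not minimal; \textbf{(b)} if $B=\Delta^d+u\ast lk(u,B)$ is a minimal $NH$-ball which is not a simplex, then $u\in L$ for \emph{every} decomposition $S=B+L$ of a minimal $NH$-sphere; and \textbf{(c)} consequently $lk(u,B)$ is itself a minimal $NH$-ball. The plan is to establish (b) first; then (c) is immediate, and (a) reduces to (c).

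For (b), fix a decomposition $S=B+L$ with $S$ a minimal $NH$-sphere. By the definition of $NH$-sphere, $\dim B=\dim S=:d$. Since $B$ is a minimal $NH$-ball which is not a simplex, and any $d$-dimensional complex has at least $d+1$ vertices, with equality only for $\Delta^d$, we get $|V_B|\geq d+2$; together with Proposition \ref{lemmavertices2} this yields $|V_B|=d+2$, which is precisely what permits writing $B=\Delta^d+u\ast lk(u,B)$ with $u\notin V_{\Delta^d}$. On the other hand $S$ is a $d$-dimensional minimal $NH$-sphere, so $|V_S|=d+2$ by Proposition \ref{lemmavertices}; since $V_B\subseteq V_S$ this forces $V_B=V_S$. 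Now suppose $u\notin V_L$. Then $V_L\subseteq V_S\setminus\{u\}=V_{\Delta^d}$, so every face of $L$ is a face of $\Delta^d$, i.e.\ $L\subseteq\Delta^d\subseteq B$, and hence $S=B+L=B$. But $B$ is collapsible, so it is contractible, whereas $S$ has the homotopy type of a $k$-sphere with $k=\dim_h(S)\geq 0$ (note $d\geq 1$, since $B$ is not a simplex) and so is not contractible -- a contradiction. Therefore $u\in L$.

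For (c), since $\{u\}\in L$ the link-decomposition property recalled in $\S$\ref{Subsection:NHBallsAndSpheres} gives that $lk(u,S)$ is an $NH$-sphere with decomposition $lk(u,S)=lk(u,B)+lk(u,L)$, and Remark \ref{lemalinkminimales}, applied to $v=u\in V_L$, shows it is \emph{minimal}; thus $lk(u,B)$, being one of the two pieces of a decomposition of a minimal $NH$-sphere, is a minimal $NH$-ball. For (a), take any $NH$-ball $B_0$ that is not minimal -- for instance any combinatorial ball with more than $\dim(B_0)+2$ vertices -- and a vertex $u\notin B_0$, and put $\tilde B:=\Delta_{B_0}+u\ast B_0$. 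By Lemma \ref{lema:technical} (with $\eta=\{u\}$, so that $\partial\eta\ast\Delta(V_{B_0})=\Delta_{B_0}$), $\tilde B$ is an $NH$-ball; a vertex count, using $\dim(B_0)<|V_{B_0}|-1$ (as $B_0$ is not a simplex), gives $|V_{\tilde B}|=\dim(\tilde B)+2$, so $\tilde B$ is vertex-minimal, and it is not a simplex. Since $lk(u,\tilde B)=B_0$ and $\tilde B=\Delta_{B_0}+u\ast lk(u,\tilde B)$, if $\tilde B$ were minimal then (b) and (c), applied to $\tilde B$, would force $B_0$ to be a minimal $NH$-ball, contradicting the choice of $B_0$; hence $\tilde B$ is a vertex-minimal $NH$-ball that is not minimal.

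The one delicate point is the bookkeeping in (b): one has to notice that the hypothesis ``$B$ is not a simplex'' is exactly what upgrades $|V_B|\leq d+2$ (Proposition \ref{lemmavertices2}) to $|V_B|=d+2$, and that $\dim B=\dim S$ for every decomposition, so that $V_B$ and $V_S$ must coincide. After that there is nothing computational: $u\notin V_L$ collapses $S$ onto the contractible complex $B$, which is absurd. The use of the link-decomposition statement in (c) is legitimate precisely because it only needs $u$ to be a vertex of $L$, which is what (b) supplies.
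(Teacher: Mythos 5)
Your proof is correct and follows essentially the route the paper intends: the statement is left as a remark, with $u\in L$ justified (as in Remark \ref{u is in L spheres}) by the fact that $V_S=V_B$ forces any part of the decomposition avoiding $u$ to sit inside $\Delta^d\subseteq B$, and with the minimality of $lk(u,B)$ deduced from the link decomposition $lk(u,S)=lk(u,B)+lk(u,L)$ and Remark \ref{lemalinkminimales}, exactly as you do. Your only deviation is cosmetic: where one could conclude directly that $L\subseteq\Delta^d\subseteq B$ gives $B\cap L=L\neq\partial L$, you instead observe $S=B$ and contradict contractibility --- both are valid, and your parts (a) and (c) match the paper's intended construction and argument.
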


\begin{teo} \label{corollarydualitypreservesminimality2} Let $K$ be a finite simplicial complex and let $\tau$ be a simplex (possibly empty) disjoint from $K$. Then, $K$ is a minimal $NH$-ball if and only if $K^{\tau}$ is a minimal $NH$-ball. That is, the class of minimal $NH$-balls is closed under taking Alexander dual.\end{teo}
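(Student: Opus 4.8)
The plan is to mirror the proof of Theorem~\ref{corollarydualitypreservesminimality} as closely as possible, exploiting the fact that formula \eqref{eq:unica} and Lemma~\ref{lema:technical} already handle the passage between $K^*$ and $K^{\tau}$ for an arbitrary simplex $\tau$. As in the spherical case, it suffices to treat the case $\tau=\emptyset$: once we know $K$ is a minimal $NH$-ball iff $K^*$ is, the general case $\tau\neq\emptyset$ follows since $K^{\tau}=\partial\tau\ast\Delta_K+\tau\ast K^*$ is an $NH$-ball iff $K^*$ is (Lemma~\ref{lema:technical}), and the reverse implications use $(K^{\tau})^*=K$ when $\tau\neq\emptyset$ and $(K^*)^{\Delta(V_K-V_{K^*})}=K$ when $\tau=\emptyset$ (see \S2.3). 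So the whole content is: $K$ is a minimal $NH$-ball $\iff$ $K^*$ is a minimal $NH$-ball.

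For the forward direction I would argue by induction on $d=\dim(K)$. If $K=\Delta^d$ then $K^*=\emptyset$, which is a (degenerate) minimal $NH$-ball, so assume $K\neq\Delta^d$; by Proposition~\ref{lemmavertices2} we may write $K=\Delta^d+u\ast lk(u,K)$ with $u\notin\Delta^d$. By Remark~\ref{u is in L balls}, $lk(u,K)$ is a minimal $NH$-ball of dimension $<d$, and by Lemma~\ref{lema:dualvertexminimal}, $K^*=lk(u,K)^{\rho}$ where $\rho=\Delta(V_K-V_{st(u,K)})$. The inductive hypothesis (in the general-$\tau$ form, which is the statement being proved, hence legitimately available for smaller dimension) gives that $lk(u,K)^{\rho}$ is a minimal $NH$-ball. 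The only subtlety is the base of the induction / the degenerate cases $\dim(lk(u,K))=0$, which I would check by hand. The reverse direction is then immediate exactly as in Theorem~\ref{corollarydualitypreservesminimality}: if $K^*$ is a minimal $NH$-ball, recover $K$ as an Alexander dual of $K^*$ and apply the forward direction.

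The one genuinely new point — the place where the ball argument departs from the sphere argument — is that ``minimal $NH$-ball'' is not defined by a numerical condition on $\m(K)$ but by the \emph{existence} of a completing combinatorial ball $L$ with $B+L$ a minimal $NH$-sphere. So when I conclude ``$lk(u,K)^{\rho}$ is a minimal $NH$-ball'' I must actually be producing, or knowing the existence of, such an $L$. The clean way around this is to reduce the ball statement to the sphere statement: given a minimal $NH$-ball $B=K$ with completing ball $L$, consider the minimal $NH$-sphere $S=B+L$; apply Theorem~\ref{corollarydualitypreservesminimality} to get that $S^{\rho'}$ is a minimal $NH$-sphere for the appropriate $\rho'$, and then identify $K^*$ (resp.\ $K^\tau$) as a top generated $NH$-ball appearing in a decomposition of $S^{\rho'}$ coming from the decomposition $S=B+L$. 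In other words, the key step is to check that Alexander duality transports a \emph{decomposition} $S=B+L$ to a decomposition of the dual sphere in which the dual of $B$ plays the role of the $NH$-ball part. This compatibility of the Alexander dual with decompositions $S=B+L$ — presumably drawing on the link formulas $lk(\sigma,S)=lk(\sigma,B)+lk(\sigma,L)$ and on formula \eqref{eq:unica} applied piece by piece — is where I expect the real work to be, and it is the step I would write out carefully; everything else is the same bookkeeping with $\m(\cdot)$, $\dim_h(\cdot)$ and vertex counts already carried out for spheres.
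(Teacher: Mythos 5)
Your outline of the $\tau=\emptyset$ case is exactly the paper's: induction on $d$, writing $K=\Delta^d+u\ast lk(u,K)$, invoking Remark \ref{u is in L balls} and Lemma \ref{lema:dualvertexminimal} to get $K^*=lk(u,K)^{\rho}$, and applying the general-$\tau$ inductive hypothesis. Note that here the worry you raise about ``producing the completing ball'' does not arise: the inductive hypothesis asserts that $lk(u,K)^{\rho}$ \emph{is} a minimal $NH$-ball, and by definition that already includes the existence of a completing combinatorial ball. The genuine difficulty sits elsewhere, namely in your very first reduction. For spheres, the step from $K^*$ to $K^{\tau}$ was closed by Lemma \ref{lema:technical} \emph{plus} a numerical computation of $\mathsf{m}(K^{\tau})$ and $\dim_h(K^{\tau})$; for balls, Lemma \ref{lema:technical} only yields that $K^{\tau}$ is an $NH$-ball, and since minimality of $NH$-balls is not a numerical condition, the claim ``it suffices to treat $\tau=\emptyset$'' is not yet justified. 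One must exhibit a completing ball for $K^{\tau}$.

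Your proposed fix for this --- dualize the ambient sphere $S=B+L$ and identify $K^{\tau}$ as the top generated $NH$-ball in a decomposition of $S^{\rho'}$ --- cannot work, because Alexander duality reverses inclusions: $B\subset S$ forces $S^{*_W}\subset B^{*_W}$ over any common ground set $W$, so the dual of $B$ \emph{contains} the dual of $S$ and cannot appear as a top generated subcomplex of it. Concretely, for $B=\partial\tau\ast\Delta^{d-\dim(\tau)}$ and $S=B+\tau\ast\partial\Delta^{d-\dim(\tau)}=\partial\Delta^{d+1}$ one has $B^*=\Delta^{d-\dim(\tau)}$ while $S^*=\{\emptyset\}$. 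There is also no union-type formula transporting the decomposition $S=B+L$ to one of $S^*$ (one only has $S^{*_W}=B^{*_W}\cap L^{*_W}$). The paper closes the gap in the opposite direction: having established (via the $\tau=\emptyset$ case) a decomposition $\tilde{S}=K^*+\tilde{L}$ of a minimal $NH$-sphere, it sets $S:=K^{\tau}+\tau\ast\tilde{L}=\partial\tau\ast\Delta_K+\tau\ast\tilde{S}$, checks that this is an $NH$-sphere by Lemma \ref{lema:technical}, verifies $K^{\tau}\cap(\tau\ast\tilde{L})=\partial(\tau\ast\tilde{L})$ by a direct computation with formula \eqref{eq:unica}, and confirms minimality from $\mathsf{m}(S)=\mathsf{m}(\partial\tau)+\mathsf{m}(\tilde{S})=\dim_h(S)+2$ (with a small adjustment of vertex sets when $K^*$ is a simplex). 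That explicit construction is the missing ingredient in your argument.
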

	
\begin{proof} Assume first that $K$ is a minimal $NH$-ball and proceed by induction on $d=\dim(K)$. The case $\tau=\emptyset$ follows the same reasoning as the proof of Theorem \ref{corollarydualitypreservesminimality} using the previous remarks. Suppose then $\tau\neq\emptyset$. Since by the previous case $K^*$ is a minimal $NH$-ball, there exists a decomposition $\tilde{S}=K^*+\tilde{L}$ of a minimal $NH$-sphere. By Propositions \ref{lemmavertices} and \ref{lemmavertices2}, either $K^*$ is a simplex (and $V_{\tilde{S}}-V_{K^*}=\{w\}$ is a single vertex) or $V_{\tilde{S}}=V_{K^*}\subset V_K$. Let $S:=K^{\tau}+\tau\ast\tilde{L}$, where we identify the vertex $w$ with any vertex in $V_K-V_{K^*}$ if $K^*$ is a simplex. We claim that $S=K^{\tau}+\tau\ast\tilde{L}$ is a valid decomposition of a minimal $NH$-sphere. On one hand, formula \eqref{eq:unica} and Lemma \ref{lema:technical} imply that $K^{\tau}$ is an $NH$-ball and that $$S=\partial\tau\ast\Delta_K+\tau\ast K^*+\tau\ast\tilde{L}=\partial\tau\ast\Delta_{K}+\tau\ast\tilde{S}$$ is an $NH$-sphere. Also,
\begin{align*}
K^{\tau}\cap(\tau\ast\tilde{L})&=(\partial\tau\ast\Delta_{K}+\tau\ast K^*)\cap(\tau\ast\tilde{L})\\
&=\partial\tau\ast\tilde{L}+\tau\ast(K^*\cap\tilde{L})\\
&=\partial\tau\ast\tilde{L}+\tau\ast\partial\tilde{L}\\
&=\partial(\tau\ast\tilde{L}).
\end{align*} This shows that $S=K^{\tau}+\tau\ast\tilde{L}$ is valid decomposition of an $NH$-sphere. On the other hand, $$\mathsf{m}(S)=
\mathsf{m}(\partial\tau)+\mathsf{m}(\tilde{S})=\dim(\tau)+1+\dim(\tilde{L})+2=\dim_h(S)+2,$$ which proves that $S$ is minimal. This settles the implication.

The other implication is analogous to the corresponding part of the proof of Theorem \ref{corollarydualitypreservesminimality}.\end{proof}	

\begin{proof}[Proof of Theorem \ref{Teo: ppal} \normalfont($ii$)] It follows the same reasoning as the proof of Theorem \ref{Teo: ppal} ($i$) (replacing $\{\emptyset\}$ with $\emptyset$).\end{proof}

If $K^*=\Delta^d$ then, letting $\tau=\Delta(V_K-V_{\Delta^d})\neq\emptyset$, we have $K=(K^*)^{\tau}=\partial\tau\ast\Delta^d=(\tau,v)\Delta^{d+\dim(\tau)}$. This shows that Theorem \ref{Teo: ppal} ($ii$)  characterizes all complexes which converge to vertex-minimal balls.

\section{Further properties of minimal $NH$-balls and $NH$-spheres}

In this final section we briefly discuss some characteristic properties of minimal $NH$-balls and $NH$-spheres.

\begin{prop} \label{Prop:links_are_minimal} In a minimal $NH$-ball or $NH$-sphere, the link of every simplex is a minimal $NH$-ball or $NH$-sphere.\end{prop}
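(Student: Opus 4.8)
The plan is to reduce both cases to a single vertex-link statement and then bootstrap to arbitrary simplices. First I would observe that it suffices to prove the claim when the simplex is a single vertex $v$: for a general simplex $\sigma = v \ast \sigma'$ one has $lk(\sigma, K) = lk(\sigma', lk(v,K))$, so if every vertex link of a minimal $NH$-ball (resp.\ $NH$-sphere) is again minimal, an induction on $\dim\sigma$ delivers the general statement. So the heart of the matter is: if $v \in V_S$ with $S$ a minimal $NH$-sphere, then $lk(v,S)$ is a minimal $NH$-sphere; and similarly for $NH$-balls.

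Second, I would split the vertex case according to whether $v$ lies in the combinatorial ball $L$ of a fixed decomposition $S = B + L$, or only in $B$. If $v \in V_L$, this is exactly Remark \ref{lemalinkminimales}, which already gives that $lk(v,S)$ is a minimal $NH$-sphere with decomposition $lk(v,B) + lk(v,L)$; and if moreover $v \in B \cap L$, then $lk(v,B)$ is a minimal $NH$-ball (it is top generated in $lk(v,S)$ and $lk(v,S) = lk(v,B) + lk(v,L)$ is a valid decomposition). The remaining — and main — case is $v \in V_B \setminus V_L$. Here I would use Proposition \ref{lemmavertices} to write $S = \Delta^d + u \ast lk(u,S)$ with $u \notin \Delta^d$; by Remark \ref{u is in L spheres} we have $u \in L$, so $v \neq u$ and $v$ is a vertex of $\Delta^d$, i.e.\ $v \in V_{lk(u,S)}$ as well. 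Now $lk(v,S) = lk(v,\Delta^d) + u \ast lk(v, lk(u,S)) = \Delta^{d-1} + u \ast lk(u, lk(v,S))$, and $lk(u,S)$ is itself a minimal $NH$-sphere of homotopy dimension $k-1$ to which the inductive hypothesis (on homotopy dimension, say) applies, yielding that $lk(v, lk(u,S))$ is minimal. One then checks that $lk(v,S)$, written this way as $\Delta^{d-1} + u \ast (\text{minimal } NH\text{-sphere})$, is a minimal $NH$-sphere: it is an $NH$-manifold since all its vertex links are $NH$-balls or $NH$-spheres, and counting principal simplices via $\mathcal{N}(lk(v,S)) = \mathcal{N}(lk(u, lk(v,S)))$ extended by the cone apex $u$ gives $\mathsf{m}(lk(v,S)) = \mathsf{m}(lk(v, lk(u,S))) + \text{(simplices of }\Delta^{d-1}\text{ not through }u) = \dim_h(lk(v,S)) + 2$ — here one must be slightly careful, but the structure is the same nerve computation used in Remark \ref{lemalinkminimales}.

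For the $NH$-ball case, I would argue analogously: given a minimal $NH$-ball $B$ with decomposition $S = B + L$ of a minimal $NH$-sphere, and $v \in V_B$, the link $lk(v,S)$ is a minimal $NH$-sphere by the sphere case just proved, and $lk(v,S) = lk(v,B) + lk(v,L)$ is a valid decomposition whenever $v \in V_L$ (Remark \ref{lemalinkminimales}); if $v \notin V_L$ then $lk(v,L)$ is empty and $lk(v,S) = lk(v,B)$ is itself a minimal $NH$-sphere, so we need the trivial decomposition — but an $NH$-sphere that equals $lk(v,B)$ as a complex with $lk(v,B)$ collapsible would be problematic, so in this subcase one instead notes $v \in V_B \setminus V_L$ forces, via the structure $B = \Delta^d + u \ast lk(u,B)$ with $u \in L$, that $lk(v,B) = \Delta^{d-1} + u \ast lk(u, lk(v,B))$ is again of the same shape with $lk(u, lk(v,B))$ a minimal $NH$-ball by induction, hence is a minimal $NH$-ball by Lemma \ref{Lema:equivalencesOfMinimalBalls}-type reasoning together with the defining decomposition inherited from $lk(v,S) = lk(v,B) + lk(v,L)$. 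In all cases one exhibits $lk(v,B)$ as a top generated $NH$-ball inside the minimal $NH$-sphere $lk(v,S)$ completing a valid decomposition, which is precisely minimality.

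\textbf{Main obstacle.} The delicate point is the case $v \in V_B \setminus V_L$: one must verify that the expression $lk(v,S) = \Delta^{d-1} + u \ast (\text{minimal } NH\text{-sphere})$ really is a \emph{valid} decomposition of a minimal $NH$-sphere — i.e.\ produce the combinatorial ball factor explicitly and check the boundary-intersection condition — rather than merely having the right number of principal simplices and the right homotopy type. The bookkeeping parallels Remark \ref{lemalinkminimales} and Theorem \ref{corollarydualitypreservesminimality}, but the interplay between the cone coordinate $u$ and the deleted vertex $v$, and the identification of which sub-ball of $lk(v,S)$ serves as $L$, requires care; everything else is a routine induction on dimension (or homotopy dimension) built on Remarks \ref{lemalinkminimales}, \ref{u is in L spheres}, \ref{u is in L balls} and Proposition \ref{lemmavertices}.
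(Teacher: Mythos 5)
Your reduction to vertex links and your identification of the key structural equation $lk(v,K)=\Delta^{d-1}+u\ast lk(v,lk(u,K))$ (with $lk(v,lk(u,K))$ minimal by induction) match the paper exactly. But the step you yourself flag as the ``main obstacle'' --- verifying that a complex of the form $\Delta^{d-1}+u\ast(\text{minimal } NH\text{-sphere or }NH\text{-ball})$ is again a minimal $NH$-sphere or $NH$-ball --- is a genuine gap, not just bookkeeping. Counting principal simplices and matching the homotopy dimension does not establish that $lk(v,K)$ is an $NH$-sphere (resp.\ $NH$-ball) in the first place: for the sphere case you would have to exhibit a top generated combinatorial ball $L'$ inside $lk(v,K)$ with $B'\cap L'=\partial L'$, and for the ball case you would have to embed $lk(v,K)$ as the $NH$-ball factor of a decomposition of some minimal $NH$-sphere. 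Neither construction is carried out in your proposal, and your attempted case split for balls ($v\in V_L$ versus $v\notin V_L$) runs into exactly this difficulty, as you note.

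The paper avoids this entirely by routing the conclusion through Alexander duality: since $lk(v,K)=\Delta^{d-1}+u\ast lk(v,lk(u,K))$ is vertex-minimal, Lemma \ref{lema:dualvertexminimal} gives $lk(v,K)^{*}=lk(v,lk(u,K))^{\rho}$ for a suitable simplex $\rho$, and Theorems \ref{corollarydualitypreservesminimality} and \ref{corollarydualitypreservesminimality2} (closure of minimal $NH$-spheres and minimal $NH$-balls under $K\mapsto K^{\tau}$, in both directions) then say that $lk(v,K)^{*}$ is minimal because $lk(v,lk(u,K))$ is, and that $lk(v,K)$ is minimal because its dual is. No explicit decomposition of $lk(v,K)$ ever needs to be produced. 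To repair your argument you should either adopt this duality step or supply the missing construction of the combinatorial ball factor in the decomposition of $lk(v,K)$; as written, the proof is incomplete at its central point.
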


\begin{proof} Let $K$ be a minimal $NH$-ball or $NH$-sphere of dimension $d$ and let $\sigma\in K$. We may assume $K\neq\Delta^d$. Since for a non-trivial decomposition $\sigma=w\ast\eta$ we have $lk(\sigma,S)=lk(w,lk(\eta,S))$, by an inductive argument it suffices to prove the case $\sigma=v\in V_K$. We proceed by induction on $d$. We may assume $d\geq 1$. Write $K=\Delta^d+u\ast lk(u,K)$ where, as shown before, $lk(u,K)$ is either a minimal $NH$-ball or a minimal $NH$-sphere. Note that this in particular settles the case $v=u$. Suppose then $v\neq u$. If $v\notin lk(u,K)$ then $lk(v,K)=\Delta^{d-1}$. Otherwise, $lk(v,K)=\Delta^{d-1}+u\ast lk(v,lk(u,K))$. By inductive hypothesis, $lk(v,lk(u,K))$ is a minimal $NH$-ball or $NH$-sphere. By Lemma \ref{lema:dualvertexminimal}, $$lk(v,K)^*=lk(v,lk(u,K))^{\rho},$$ and the result follows from Theorems \ref{corollarydualitypreservesminimality} and \ref{corollarydualitypreservesminimality2}.\end{proof}

For any vertex $v\in K$, the \emph{deletion} $K-v=\{\sigma\in K\,|\, v\notin\sigma\}$ is again a minimal $NH$-ball or $NH$-sphere. This follows from Proposition \ref{Prop:links_are_minimal}, Theorems \ref{corollarydualitypreservesminimality} and \ref{corollarydualitypreservesminimality2} and the fact that $lk(v,K^*)=(K-v)^*$ for any $v\in V_K$ (see \cite[Lemma 4.2 (1)]{CM2}). Also, Remark \ref{u is in L balls} implies that minimal $NH$-balls are (non-pure) vertex-decomposable as defined by Bj\"orner and Wachs (see \cite[\S 11]{BjWa2}).

Finally, we make use of Theorems \ref{corollarydualitypreservesminimality} and \ref{corollarydualitypreservesminimality2} to compute the number of minimal $NH$-spheres and $NH$-balls in each dimension.

\begin{prop}

	Let $0\leq k\leq d$.\begin{enumerate}
	\item There are exactly $\binom{d}{k}$ minimal $NH$-spheres of dimension $d$ and homotopy dimension $k$. In particular, there are exactly $2^d$ minimal $NH$-spheres of dimension $d$.
	\item There are exactly $2^d$ minimal $NH$-balls of dimension $d$.
	\end{enumerate}
	
	\begin{proof} We first prove ($1$). An $NH$-sphere with $d=k$ is homogeneous by \cite[Proposition 2.4]{CM2}, in which case the result is obvious. Assume then $0\leq k\leq d-1$ and proceed by induction on $d$. Let $\mathcal{S}_{d,k}$ denote the set of minimal $NH$-spheres of dimension $d$ and homotopy dimension $k$. If $S\in\mathcal{S}_{d,k}$ it follows from Theorem \ref{corollarydualitypreservesminimality}, Corollary \ref{corollarydropsvertices} and Alexander duality that $S^*$ is a minimal $NH$-sphere with $\dim(S^*)<d$ and $\dim_h(S^*)=d-k-1$. Therefore, there is a well defined application
	$$\mathcal{S}_{d,k}\stackrel{f}{\longrightarrow}\bigcup_{i=d-k-1}^{d-1}\mathcal{S}_{i,d-k-1}$$ sending $S$ to $S^*$. We claim that $f$ is a bijection. To prove injectivity, suppose $S_1,S_2\in\mathcal{S}_{d,k}$ are such that $S_1^*=S_2^*$. Let $\rho_i=\Delta(V_{S_i}-V_{S_i^*})$ ($i=1,2$). Since $|V_{S_1}|=d+2=|V_{S_2}|$ then $\dim(\rho_1)=\dim(\rho_2)$ and, hence, $S_1=(S_1^*)^{\rho_1}=(S_2^*)^{\rho_2}=S_2$. To prove surjectivity, let $\tilde{S}\in\mathcal{S}_{j,d-k-1}$ with $d-k-1\leq j\leq d-1$. Taking $\tau=\Delta^{d-j-1}$ we have $\tilde{S}^{\tau}\in\mathcal{S}_{d,k}$ and $f(\tilde{S}^{\tau})=\tilde{S}$ (see $\S$2.3). Finally, using the inductive hypothesis, $$|\mathcal{S}_{d,k}|=\sum_{i=d-k-1}^{d-1}|\mathcal{S}_{i,d-k-1}|=\sum_{i=d-k-1}^{d-1}\binom{i}{d-k-1}=\binom{d}{k}.$$
	
	For (2), let $\mathcal{B}_d$ denote the set of minimal $NH$-balls of dimension $d$ and proceed again by induction on $d$. The very same reasoning as above gives a well defined bijection $$\mathcal{B}_d-\{\Delta^d\}\stackrel{f}{\longrightarrow}\bigcup_{i=0}^{d-1}\mathcal{B}_i.$$ Therefore, using the inductive hypothesis, $$|\mathcal{B}_d-\{\Delta^d\}|=\sum_{i=0}^{d-1}|\mathcal{B}_i|=\sum_{i=0}^{d-1}2^i=2^d-1.\qedhere$$\end{proof}\end{prop}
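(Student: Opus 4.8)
The plan is to prove both counts by induction on $d$, using the Alexander dual as a dimension-dropping bijection within the class of minimal complexes. For part (1), fix $0\le k\le d$ and let $\mathcal{S}_{d,k}$ be the set of (isomorphism classes of) minimal $NH$-spheres of dimension $d$ and homotopy dimension $k$. The base case is $d=k$: such a sphere is homogeneous by \cite[Proposition 2.4]{CM2}, hence a vertex-minimal combinatorial $d$-sphere, i.e. $\partial\Delta^{d+1}$, so $|\mathcal{S}_{d,d}|=1=\binom{d}{d}$. Assume now $0\le k\le d-1$. Using Theorem \ref{corollarydualitypreservesminimality}, Corollary \ref{corollarydropsvertices}, Proposition \ref{lemmavertices} and Alexander duality I would first check that $S\mapsto S^*$ is a well-defined map
$$f\colon\mathcal{S}_{d,k}\longrightarrow\bigcup_{i=d-k-1}^{d-1}\mathcal{S}_{i,\,d-k-1},$$
since $S^*$ is again a minimal $NH$-sphere, its dimension drops strictly (so $\dim S^*\le d-1$), and, because $|V_S|=d+2$, the combinatorial Alexander-duality formula gives $\dim_h(S^*)=|V_S|-\dim_h(S)-3=d-k-1$, a quantity that also bounds $\dim S^*$ from below.

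The heart of the argument is that $f$ is a bijection. For injectivity, if $S_1^*=S_2^*$ then the simplices $\rho_i:=\Delta(V_{S_i}-V_{S_i^*})$ have the same dimension because $|V_{S_1}|=d+2=|V_{S_2}|$, and reconstructing via formula \eqref{eq:unica} gives $S_1=(S_1^*)^{\rho_1}=(S_2^*)^{\rho_2}=S_2$. For surjectivity, given $\tilde S\in\mathcal{S}_{j,\,d-k-1}$ with $d-k-1\le j\le d-1$, set $\tau=\Delta^{d-j-1}$ (a genuine simplex since $j\le d-1$); then $\tilde S^{\tau}$ is a minimal $NH$-sphere by Theorem \ref{corollarydualitypreservesminimality}, and the duality formulas give $\dim(\tilde S^{\tau})=d$, $\dim_h(\tilde S^{\tau})=k$ and $(\tilde S^{\tau})^*=\tilde S$ (valid because $\tilde S$ is not a simplex), so $\tilde S^{\tau}\in\mathcal{S}_{d,k}$ is a preimage of $\tilde S$. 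Feeding the inductive hypothesis into the resulting count then yields
$$|\mathcal{S}_{d,k}|=\sum_{i=d-k-1}^{d-1}|\mathcal{S}_{i,\,d-k-1}|=\sum_{i=d-k-1}^{d-1}\binom{i}{d-k-1}=\binom{d}{k}$$
by the hockey-stick identity; summing over $k$ gives $2^d$ spheres of dimension $d$.

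For part (2) I would run the identical argument with $\mathcal{B}_d$ the set of minimal $NH$-balls of dimension $d$, the only novelty being that the simplex $\Delta^d$ is now exceptional: $(\Delta^d)^*=\emptyset$ is not a minimal $NH$-ball, whereas every other $B\in\mathcal{B}_d$ has $B^*\in\mathcal{B}_i$ for some $0\le i\le d-1$. Thus $f$ restricts to a bijection $\mathcal{B}_d-\{\Delta^d\}\to\bigcup_{i=0}^{d-1}\mathcal{B}_i$, using Theorem \ref{corollarydualitypreservesminimality2}, Corollary \ref{corollarydropsvertices2}, Proposition \ref{lemmavertices2} and formula \eqref{eq:unica}; in the surjectivity step one cones with $\tau=\Delta^{d-j-1}$ when $\tilde B$ is not a simplex and with $\tau=\Delta^{d-j}$ when $\tilde B=\Delta^j$, so that the vertex counts match. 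With base case $|\mathcal{B}_0|=1$, induction gives $|\mathcal{B}_d-\{\Delta^d\}|=\sum_{i=0}^{d-1}|\mathcal{B}_i|=\sum_{i=0}^{d-1}2^i=2^d-1$, hence $|\mathcal{B}_d|=2^d$.

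The step I expect to be the main obstacle is showing that $f$ is well defined and surjective with exactly the stated index ranges: one must pin down precisely which combinatorial dimensions $i$ can occur for $S^*$ (respectively $B^*$) and verify that every element of the target union is actually realized by coning off a suitable $\tau$. The bookkeeping here --- intertwining vertex-minimality (which forces the ambient vertex set to have exactly $d+2$ elements, via Propositions \ref{lemmavertices} and \ref{lemmavertices2}), the Alexander-duality dimension formula, and the fact that each iterated dual is formed relative to its own vertex set --- is where an off-by-one slip is most likely to creep in, particularly in the ball case, where $\Delta^d$ requires separate treatment.
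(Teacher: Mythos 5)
Your proposal is correct and follows essentially the same route as the paper: the same dimension-dropping bijection $S\mapsto S^*$ onto $\bigcup_{i=d-k-1}^{d-1}\mathcal{S}_{i,d-k-1}$, the same injectivity argument via $\rho_i=\Delta(V_{S_i}-V_{S_i^*})$ and formula \eqref{eq:unica}, the same surjectivity construction $\tilde S^{\tau}$, and the same hockey-stick summation, with the ball case handled by excluding $\Delta^d$. Your extra observation that in the surjectivity step for balls one must take $\tau=\Delta^{d-j}$ when $\tilde B=\Delta^j$ (rather than $\Delta^{d-j-1}$) is a correct refinement of a detail the paper leaves implicit under ``the very same reasoning.''
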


\subsection*{Acknowledgement} I am grateful to Gabriel Minian for many helpful remarks and suggestions during the preparation of the paper.


\begin{thebibliography}{99}


\bibitem{BjTa} A. Bj\"orner, M. Tancer. \textit{Combinatorial Alexander Duality -- A short and elementary proof}.
       Discrete Comput. Geom. 42 (2009), No. 4, 586-593.  
       
\bibitem{BjWa} A. Bj\"orner, M. Wachs. \textit{Shellable nonpure complexes and posets. I}.
	   Trans. Am. Math. Soc. 348 (1996), No. 4, 1299-1327.
	   
\bibitem{BjWa2} A. Bj\"orner, M. Wachs. \textit{Shellable nonpure complexes and posets. II}.
	   Trans. Am. Math. Soc. 349 (1997), No. 10, 3945-3975.
       
\bibitem{CM} N. A. Capitelli, E. G. Minian. \textit{Non-homogeneous combinatorial manifolds}.
       Beitr. Algebra Geom. 54 (2013), No. 1, 419-439.
       
\bibitem{CM2} N. A. Capitelli, E. G. Minian. \textit{A generalization of a result of Dong and Santos-Sturmfels on the Alexander dual of spheres and balls}.
       Preprint 2014. {\tt arXiv:1403.1291 [math.AT]}
      
\bibitem{Dong} X. Dong. \textit{Alexander duality for projections of polytopes}.
       Topology 41 (2002), No. 6, 1109-1121.
       

\bibitem{RoSa} C.P. Rourke, B.J. Sanderson. \textit{Introduction to piecewise-linear topology}. Springer-Verlag (1972).

\bibitem{SS} F. Santos, B. Sturmfels. \textit{Alexander duality in subdivisions of Lawrence polytopes}.
Adv. Geom. 3 (2003), No. 2, 177-189.

\end{thebibliography}
\end{document}